\documentclass{procpost}
\usepackage{amssymb}

\newcommand{\emp}{\emptyset}
\newcommand{\ben}{\mathbb N}
\newcommand{\ber}{\mathbb R}
\newcommand{\beq}{\mathbb Q}
\newcommand{\bez}{\mathbb Z}

\newcommand{\nhat}[1]{\{1,2,\ldots,#1\}}
\newcommand{\ohat}[1]{\{0,1,\ldots,#1\}}

\newcommand{\pf}{{\mathcal P}_f}
\newcommand{\symdif}{\bigtriangleup}

\DeclareMathOperator{\supp}{supp}

\newtheorem{theorem}{Theorem}[section]
\newtheorem{lemma}[theorem]{Lemma}
\newtheorem{corollary}[theorem]{Corollary}
\newtheorem{conjecture}[theorem]{Conjecture}

\theoremstyle{definition}
\newtheorem{definition}[theorem]{Definition}

\theoremstyle{remark}
\newtheorem{remark}[theorem]{Remark}

\numberwithin{equation}{section}

\begin{document}

\title[Partition regularity]{Partition regularity without the columns property}

\author{Ben Barber}
\address{School of Mathematics,
                  University of Birmingham,
                  Edgbaston,
                  Birmingham, B15 2TT, UK.}
\email{b.a.barber@bham.ac.uk}

\author{Neil Hindman}
\address{Department of Mathematics,
                  Howard University,
                  Washington, DC 20059, USA.}
\email{nhindman@aol.com}
\thanks{The second author acknowledges support received from the National
                Science Foundation (USA) via Grant DMS-1160566.}

\author{Imre Leader}
\address{Department of Pure Mathematics and Mathematical Statistics,
                  Centre for Mathematical Sciences,
                  Wilberforce Road, Cambridge, CB3 0WB, UK.}
\email{i.leader@dpmms.cam.ac.uk}

\author{Dona Strauss}
\address{Department of Pure Mathematics,
                  University of Leeds, Leeds LS2 9J2, UK.}
\email{d.strauss@hull.ac.uk}
\subjclass[2010]{05D10}

\begin{abstract}

A finite or infinite matrix $A$ with rational entries is called {\it partition
regular\/} if whenever the natural numbers are finitely coloured there is a 
monochromatic vector $x$ with $Ax=0$. Many of the classical theorems of Ramsey
Theory may naturally be interpreted as assertions that particular matrices
are partition regular. In the finite case, Rado proved that a matrix is
partition regular if and only it satisfies a computable condition known as the 
{\it columns property\/}. The first requirement
of the columns property is that some set of columns sums to zero.

In the infinite case, much less is known. There are many examples of matrices 
with the columns property that are not partition regular, but until now all
known examples of partition regular matrices did have the columns property. Our 
main
aim in this paper is to show that, perhaps surprisingly, there are infinite
partition regular matrices without the columns property --- in fact, having no
set of columns summing to zero.

We also make a conjecture that if a partition regular matrix (say with integer
coefficients) has bounded row
sums then it must have the columns property, and prove a first step towards
this.
\end{abstract}

\maketitle

\section{Introduction}

Let $A$ be an $u \times v$ matrix with rational entries. We say that $A$ is
{\it kernel partition regular}, or simply {\it partition regular}, if for 
every finite colouring of the natural
numbers $\ben =\{ 1,2,\ldots \}$ there is a monochromatic vector $\vec x \in 
\ben^v$
with $A\vec x=\vec 0$. In other words, $A$ is partition regular if for every positive
integer $k$, and every function $c:\ben \rightarrow \{1,\ldots,k \}$, there is
a vector $\vec x=(x_1,\ldots,x_v) \in \ben^v$ with
$c(x_1)=\ldots=c(x_v)$ such that $A\vec x=\vec 0$. We may also speak of the `system of
equations $A\vec x=\vec 0$' being partition regular.

Many of the classical results of Ramsey Theory may naturally be considered
as statements about partition regularity.
For example, Schur's Theorem \cite{S}, that in any finite
colouring of the natural numbers we may solve $x+y=z$ in one colour class, is
precisely the assertion that the $1 \times 3$ matrix $(1, 1, -1)$ is partition
regular. As another example, the theorem of van der Waerden \cite{W} 
that, for any
$m$, every finite colouring of the natural numbers contains a monochromatic
arithmetic progression with $m$ terms, is (with the strengthening that we may
also choose the common difference of the sequence to have the same colour)
exactly the statement that the $(m-1) \times (m+1)$ matrix 
\[
\begin{pmatrix}
1&1&-1&0&\ldots&0&0\\
1&0&1&-1&\ldots&0&0\\
1&0&0&1&\ldots&0&0\\
\vdots&\vdots&\vdots&\vdots&\ddots&\vdots&\vdots\\
1&0&0&0&\ldots&1&-1
\end{pmatrix}\]
is 
partition regular.

In 1933 Rado characterized those matrices that are partition regular
over $\ben$ in terms of the {\it columns property\/}.

\begin{definition}\label{defcc} Let $u,v\in\ben$ and let $A$ be a $u\times v$ matrix with entries 
from $\beq$.  Denote the columns of $A$ by $\langle\vec c_i\rangle_{i=0}^{v-1}$.
  The matrix $A$ satisfies the {\it columns property\/} if and only if
there exist $m\in\nhat{v}$ and a partition $\langle I_t\rangle_{t=0}^{m-1}$ of 
$\ohat{v-1}$ such that
\begin{enumerate}
  \item[(1)] $\sum_{i\in I_0}\vec c_i \ = \ \vec 0$ and
  \item[(2)] for each $t\in \nhat{m-1}$, $\sum_{i\in I_t}\vec c_i$ is a linear combination with coefficients from $\beq$ of $\{\vec c_i:i\in\bigcup_{j=0}^{t-1}I_j\}$.
\end{enumerate}

\end{definition}

\begin{theorem}\label{Rado} Let $u,v\in\ben$ and let $A$ be a $u\times v$ matrix with entries 
from $\beq$. Then $A$ is partition regular if and only if $A$ satisfies the 
columns property.\end{theorem}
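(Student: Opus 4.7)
The plan is to prove the two implications separately. The easier direction conceptually is that the columns property implies partition regularity; the harder direction uses a clever multiplicative colouring to extract the combinatorial structure of the columns property from the mere existence of monochromatic solutions.

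For the necessity of the columns property, I would for each prime $p$ consider the $(p-1)$-colouring of $\ben$ defined as follows: write $n\in\ben$ in base $p$ as $n=\sum_j a_j p^j$ with $a_j\in\ohat{p-1}$, and colour $n$ by the first nonzero digit $a_j$. Partition regularity supplies, for each $p$, a monochromatic $\vec x\in\ben^v$ with $A\vec x=\vec 0$. Every such solution has the shape $x_i = d\cdot p^{k_i} + (\text{higher powers of }p)$ for a common digit $d\in\nhat{p-1}$ and exponents $k_i\in\ben$. Grouping the coordinates by the value of $k_i$ (smallest first) induces a partition of $\ohat{v-1}$: reducing the equation $A\vec x=\vec 0$ modulo $p^{k+1}$ for the smallest exponent $k$ forces the columns with that exponent to sum to $\vec 0$; reducing modulo successive higher powers forces each subsequent block of columns to be a $\beq$-linear combination of the earlier ones. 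Since there are only finitely many partitions of $\ohat{v-1}$, some partition works for infinitely many primes, and the resulting partition witnesses the columns property.

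For the sufficiency direction, the standard route is through Deuber's theorem on $(m,p,c)$-sets. One first checks that whenever $A$ has the columns property, there exist parameters $m,p,c$ such that every $(m,p,c)$-set contains a solution to $A\vec x=\vec 0$; this is a linear-algebra calculation reading off the multipliers from conditions (1) and (2) of Definition \ref{defcc} and using condition (1) to handle the zero-sum block $I_0$. One then invokes Deuber's theorem, which says that the family of $(m,p,c)$-sets is itself partition regular: in any finite colouring of $\ben$ some colour class contains an $(m,p,c)$-set. This latter fact is proved by an iterated application of van der Waerden's theorem, induction on $m$, and a compactness/Ramsey-type argument.

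The main obstacle is the necessity direction, specifically the passage from ``for each large prime $p$, some monochromatic solution exists'' to ``a single partition of the columns satisfies the conditions of Definition \ref{defcc}''. One has to choose the partition carefully from the sequence of solutions (pigeonholing over the finitely many partitions of $\ohat{v-1}$), and then verify that the linear combinations appearing in condition (2) can be taken with rational coefficients independent of $p$. The sufficiency direction, while technically longer, follows a well-trodden inductive template once Deuber's theorem is in place.
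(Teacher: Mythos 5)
The paper itself offers no proof of Theorem~\ref{Rado}; it simply cites Rado's original 1933 paper. Your sketch is nonetheless a correct outline of the standard argument, and is worth measuring against what Rado actually did. The necessity direction you describe is essentially Rado's own, and the paper reuses a version of it in the infinite setting in the proof of Theorem~\ref{bounded}: after clearing denominators, colour $\ben$ by the leading nonzero base-$p$ digit, observe that any monochromatic solution has $x_i=d\,p^{k_i}+(\text{a multiple of }p^{k_i+1})$ for a common digit $d\in\nhat{p-1}$, and let the exponents $k_i$ induce an ordered partition $\langle I_t\rangle_{t<m}$ of the column indices. You rightly flag the one real subtlety: one must pigeonhole over the finitely many ordered partitions of $\ohat{v-1}$ to fix $\langle I_t\rangle$ for infinitely many primes, and then pass from mod-$p$ information to $\beq$-linear relations. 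For condition~(1) of Definition~\ref{defcc} this is immediate: $p\mid\sum_{i\in I_0}a_{r,i}$ for infinitely many $p$ forces $\sum_{i\in I_0}\vec c_i=\vec 0$. For condition~(2) the quickest route is dual rather than any attempt to ``lift'' mod-$p$ coefficients: if $\sum_{i\in I_t}\vec c_i$ lay outside the $\beq$-span of $\{\vec c_i:i\in\bigcup_{j<t}I_j\}$, one could choose an integer row vector $\phi$ annihilating all of those columns but with $\phi\big(\sum_{i\in I_t}\vec c_i\big)\neq 0$; applying $\phi$ to $A\vec x=\vec 0$ and reducing modulo $p^{k_t+1}$, where $k_t$ is the common exponent of the $x_i$ with $i\in I_t$, kills the terms with $i\in\bigcup_{j<t}I_j$ (since $\phi(\vec c_i)=0$) and the terms with $i\in\bigcup_{j>t}I_j$ (higher powers of $p$), leaving $p\mid d\,\phi\big(\sum_{i\in I_t}\vec c_i\big)$ and hence $p\mid\phi\big(\sum_{i\in I_t}\vec c_i\big)$ for infinitely many $p$, a contradiction. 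For sufficiency, routing through Deuber's $(m,p,c)$-sets is correct and is the standard modern proof, but it is not Rado's original argument (Deuber introduced $(m,p,c)$-sets some forty years after Rado's paper); it buys a cleaner, more modular structure and additional uniformity, at the cost that essentially all the difficulty migrates into the proof that the family of $(m,p,c)$-sets is itself partition regular, which, as you say, rests on iterated applications of van der Waerden's theorem and induction on $m$.
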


\begin{proof} \cite[Satz IV]{R}.\end{proof}

In this paper, we are concerned with partition regularity of infinite systems of 
homogeneous linear equations or, equivalently, with partition regularity of
infinite matrices.  In all cases we shall assume that the equations we are dealing
with have finitely many terms; that is, each row of the matrix of coefficients
has all but finitely many entries equal to $0$.  The definition of partition regularity for a matrix (or a system of linear equations)
is verbatim the same
as for finite matrices or systems of equations. 

The first example of a (non-trivial) infinite
partition regular system of
equations was constructed in 1974 \cite{Ha}, proving a conjecture
of Graham and Rothschild: in any finite colouring of the natural numbers there is a 
sequence $x_1,x_2,\ldots$ of natural numbers such that the set
$$FS(x_1,x_2,\ldots)= \{ \sum_{i \in I}x_i:\; I \subseteq \ben,\,  I 
\hbox{ finite and non-empty } \}$$
is monochromatic. This is also known as the Finite Sums Theorem. (It is worth
remarking that the finite analogue of this, known as Folkman's Theorem,
stating that, for any $m$, in any finite colouring of the
natural numbers we may find $x_1,\ldots,x_m$ with $FS(x_1,\ldots,
x_m)$ monochromatic, follows easily from Rado's Theorem.) Since then, several
other infinite partition systems have been found --- for example, the 
Milliken--Taylor Theorem \cite{M} \cite{T} and several systems in \cite{HLSaa}.
See \cite{HLS} for general background on this. 

We take $\omega=\ben \cup \{0\}$.  The columns property has an obvious 
extension to 
infinite matrices.

\begin{definition}\label{defccinf} Let $u,v\in\ben\cup\{\omega\}$ 
and let $A$ be a $u\times v$ matrix with entries 
from $\beq$.  Denote the columns of $A$ by $\langle\vec c_i\rangle_{i<v}$.
  The matrix $A$ satisfies the {\it columns property\/} if and only if
there exists a partition $\langle I_\sigma\rangle_{\sigma<\mu}$ of 
$v$, where $\mu \in\ben\cup\{\omega\}$, such that
\begin{itemize}
\item[(1)] $\sum_{i\in I_0}\vec c_i \ = \ \vec 0$ and
\item[(2)] for each $t\in \mu\setminus\{0\}$, 
$\sum_{i\in I_t}\vec c_i$ is a linear combination with coefficients
from $\beq$ of $\{\vec c_i:i\in\bigcup_{j<t}I_j\}$.
\end{itemize}
\end{definition}

We stress that the sums in this definition are {\it not} required to be
finite. Note that infinite sums of columns do always make sense, because our
matrices have only finitely many non-zero entries in each row. It is also worth
remarking that if one insisted on finite sums then nothing would work: even the
matrix corresponding to the Finite Sums Theorem needs infinite sums for the
columns property.

It is easy to see that, for infinite matrices, the columns property is not
sufficient for partition regularity.  Consider for example the system
of equations $x_n-x_{n+1}=y_n, \ n \in \omega$.  As a matrix equation this corresponds to
\[
\begin{pmatrix}
1&-1&-1&0&0&0&0&\cdots\\
0&0&1&-1&-1&0&0&\cdots\\
0&0&0&0&1&-1&-1&\cdots\\
\vdots&\vdots&\vdots&\vdots&\vdots&\vdots&\vdots&\ddots
\end{pmatrix}
\begin{pmatrix}
x_0\\ y_0\\ x_1\\ y_1\\ \vdots\end{pmatrix}=\vec 0.
\]
The system of equations is not partition regular over $\ben$ for the
trivial reason that it has no solutions there at all; any solution must have
$x_n>x_{n+1}$ for each $n$.
To see that the matrix satisfies the columns property, let
$I_0=\{0,2,4,6,\ldots\}$ and $I_1=\{1,3,5,\ldots\}$.
Then $\sum_{i\in I_0}\vec c_i=\vec 0$ and $\sum_{i\in I_1}\vec c_i=\sum_{t=1}^\infty t\vec c_{2t}$.

We shall show in Section~\ref{prnotcc} of this paper there is a system of 
linear equations which is 
partition regular over $\ben$ but for which the coefficient matrix has no set of columns summing to zero. We mention in passing that the entries and the columns
of this matrix will be `nicely behaved': all entries are from the set
$\{ -1,0,1,2 \}$ and each column of the coefficient matrix has at most 
three nonzero entries. 

The example mentioned in the previous paragraph has the property that the row sums
of the coefficient matrix are unbounded.  We also show in Section~\ref{prnotcc} that if 
$A$ is a matrix which is partition regular over $\ben$ and the
sums of the absolute values of the entries in any row are bounded, then there
must be some set of columns that sum to zero.  We do not know whether such a 
matrix must satisfy the full columns property.

The proof that the system mentioned above is partition regular over $\ben$ depends 
on the following lemma, which is based on results in \cite{BHL}. The lemma refers
to {\it central\/} sets.  From the point of view of the partition regularity of 
our system of equations, central sets have two key properties.
\begin{itemize}
\item Whenever $\ben$ is finitely
coloured, one of the colour classes must be central.
\item Any finite partition regular system of homogeneous linear equations has solutions
in any central set (see \cite[Theorem 15.16(b)]{HS}).  (The corresponding statement is not true for infinite systems.)
\end{itemize}

In the statement of this lemma, $m\cdot\bez=\{m\cdot n:n\in\bez\}$ while
$kC=C+C+\cdots+C$ ($k$ times).

\begin{lemma}\label{AminuskA} Let $C$ be a central subset of $\ben$.  There exist
$m$ and $K$ in $\ben$ such that if $k\geq K$, then $m\cdot\bez\subseteq C-kC$.
\end{lemma}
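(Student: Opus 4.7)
\emph{Proof proposal.} The plan is to combine the finite-sums machinery inside $C$ (available because $C$ is central) with the extra flexibility afforded by the Central Sets Theorem, and then to bootstrap in $k$.

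First, since $C$ is central, a standard Galvin--Glazer construction via a minimal idempotent $p\in(\beta\ben,+)$ with $C\in p$ produces a sequence $\langle x_n\rangle$ in $\ben$ with $FS(\langle x_n\rangle)\subseteq C$. A pigeon-hole refinement on residues (passing to a subsequence on which all $x_n$ are congruent modulo a suitable modulus) lets us assume that every $x_n$ is divisible by a fixed $m\in\ben$; this $m$ will play the role of $m$ in the statement.

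Next, I would show that $m\bez\subseteq C-KC$ for some specific $K$. Taking $c=\sum_{j\in F}x_j$ and $c_i=\sum_{j\in G_i}x_j$ for finite nonempty subsets $F,G_1,\ldots,G_K\subseteq\ben$ gives $c,c_1,\ldots,c_K\in C$ and
\[
c-\sum_{i=1}^K c_i\ =\ \sum_j\bigl(1_F(j)-|\{i:j\in G_i\}|\bigr)\,x_j,
\]
with coefficients in $\{-K,\ldots,1\}$. The key claim is that for a suitable choice of $\langle x_n\rangle$ every $v\in m\bez$ is realisable as such a signed combination; this is the main obstacle. For a generic IP-set inside $C$ the bounded signed combinations will not cover all of $m\bez$, so one must exploit the extra flexibility of centrality (via the Central Sets Theorem and the refinements of \cite{BHL}) to refine the generators---for example, arranging that they contain long arithmetic progressions with common difference $m$, whose telescoping differences produce every multiple of $m$ within the allowed coefficient window once $K$ is chosen large enough.

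Finally, I would propagate the inclusion from $k=K$ to every $k\geq K$. Since $x_1\in C$ and $m\mid x_1$, the inclusion
\[
C-(k+1)C\ \supseteq\ (C-kC)-x_1,
\]
together with $m\bez-x_1=m\bez$, upgrades $m\bez\subseteq C-kC$ to $m\bez\subseteq C-(k+1)C$, and a one-step induction then covers all $k\geq K$.
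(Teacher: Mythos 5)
Your final step — propagating from $k$ to $k+1$ using $x_1\in C$ with $m\mid x_1$, via $C-(k+1)C\supseteq(C-kC)-x_1$ and $m\bez-x_1=m\bez$ — is sound, and the observation that one can build the Galvin--Glazer sequence inside $m\cdot\ben\cap C$ (rather than merely refining residues afterwards) is the right way to enforce divisibility by $m$. But the whole content of the lemma is the base case, namely producing \emph{some} pair $(m,K)$ with $m\bez\subseteq C-KC$, and there your proposal has no proof, only a named obstacle. The signed-combination identity
\[
c-\sum_{i=1}^K c_i=\sum_j\bigl(1_F(j)-|\{i:j\in G_i\}|\bigr)x_j
\]
with coefficients in $\{1-K,\ldots,1\}$ cannot cover $m\bez$ when $\langle x_n\rangle$ grows even moderately fast (e.g.\ $x_{n+1}>(K+1)\sum_{j\leq n}x_j$): the set of realisable values then has vanishing density in any interval, so ``almost all'' multiples of $m$ are missed, regardless of $K$. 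You concede this and gesture at long arithmetic progressions of common difference $m$ coming from the Central Sets Theorem, but even granting such progressions inside the sequence, their telescoping differences only produce a bounded range $\{m,2m,\ldots,(L-1)m\}$; combining them with the large generators $x_j$ to reach a prescribed $v\in m\bez$ exactly is not addressed, and it is precisely the sort of exact arithmetic control that IP-type arguments are ill-suited to provide.

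The paper's route is entirely different and does supply this control. It works with a left-invariant F\o lner-type density $d$ on $\bez$ (Lemma~\ref{dense}), shows that the iterated sumsets $k(A-A)$ stabilise to a subgroup $E$ once $k\geq 2/d(A-A)$ (Lemma~\ref{subgroup}), deduces $A-kA=(A-kA)+E$ for $k\geq 2/d(A)$ (Lemmas~\ref{sminusks} and~\ref{aminusa}), and then uses syndeticity of $A-A$ (Lemma~\ref{syndetic}) to show $E$ is syndetic, hence contains $m\bez$ for some $m$. Finally, since $m\cdot\bez$ is central* it meets $A$, so $A-kA$ contains a coset of $E$ and therefore all of $E\supseteq m\bez$. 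None of this stabilisation/density machinery appears in your proposal, and it is what makes the base case go through. To repair your argument you would essentially have to reinvent this (or an equivalent) structural step; the IP-set-plus-pigeonhole framework alone does not suffice.
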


Section~\ref{proof-of-AminuskA} consists of a proof of (a generalisation of) Lemma \ref{AminuskA}, and also  
a strengthening of the main result of Section~\ref{prnotcc} (namely
Theorem \ref{iskprn}).
The proofs depend on the algebraic structure of the Stone--\v Cech compactification
$\beta\ben$ of the discrete space $\ben$.  We give a brief introduction to
that structure in that section. We mention that the point of Lemma \ref{AminuskA}, and the point of using 
central sets, is that then the proof of Theorem \ref{iskprn} is clean and direct.

\section{Partition regularity does not 
imply the\\ columns property}
\label{prnotcc}

In this section we consider the following system of equations:
\[
  2x_n+x_{2^n}+x_{2^n+1}+\cdots+x_{2^{n+1}-1}=y_n, \ \ \ n \in \omega
\]
The simplest way to represent the matrix of coefficients is as an $\omega\times(\omega+\omega)$
matrix.  That is, we let $A$ be the $\omega\times\omega$ matrix such that, for
$i,j\in\omega$,
\[
a_{i,j} =
  \begin{cases}
    2 & \text{if } j = i, \\
    1 & \text{if } 2^i \leq j < 2^{i+1}, \\
    0 & \text{otherwise,}
  \end{cases}
\]
so that 
\[
A=\begin{pmatrix}
2&1&0&0&0&0&0&0&0&\cdots\\
0&2&1&1&0&0&0&0&0&\cdots\\
0&0&2&0&1&1&1&1&0&\cdots\\
\vdots&\vdots&\vdots&\vdots&\vdots&\vdots&\vdots&\vdots&\vdots&\ddots
\end{pmatrix}.
\]
Letting $I$ be the $\omega \times \omega$ identity matrix, we
then have that the matrix of coefficients for our system of equations
is $\begin{pmatrix}A&-I\end{pmatrix}$.  It is 
immediate that there is no non-empty set of columns of this matrix summing to $\vec 0$.

\begin{theorem}\label{iskprn} For any central subset $C$ of $\ben$,
there exist $\vec x$ and $\vec y$ in $C^\omega$ such that
$\begin{pmatrix}A&-I\end{pmatrix}\begin{pmatrix} \vec x\\ 
\vec y\end{pmatrix} = \vec 0$.  In particular the system of equations
\[
  2x_n + x_{2^n} + x_{2^n+1} + \cdots + x_{2^{n+1}-1} = y_n, \ \ \ n \in \omega
\]
is partition regular over $\ben$.
\end{theorem}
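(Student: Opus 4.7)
The plan is to build sequences $(x_n), (y_n)$ in $C$ layer by layer, where $L_M := \{2^{M-1},\ldots,2^M-1\}$ for $M\geq 1$ and $L_0 := \{0\}$. The key structural observation is that the equation $y_{M-1} = 2x_{M-1} + \sum_{j\in L_M} x_j$ involves $x_{M-1}$ from an earlier layer together with the whole of $L_M$, so completing layer $L_M$ exactly determines $y_{M-1}$. I would begin by applying Lemma \ref{AminuskA} to the central set $C$ to obtain $m,K\in\ben$ with $m\bez\subseteq C-kC$ for all $k\geq K$; since any multiple of $m$ satisfies the same inclusion, I assume $m$ is even. Throughout the construction I aim to maintain the invariant $x_n \in D := C\cap (m/2)\ben$, which forces $2x_n\in m\bez$ and is what allows Lemma \ref{AminuskA} to be invoked with $z=2x_{M-1}$; standard facts (notably that $(m/2)\ben$ belongs to every idempotent of $\beta\ben$) imply that $D$ is itself central.

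The inductive step runs as follows. For each $M$ with $2^{M-1}\geq K$, given $x_{M-1}\in D$ (so $2x_{M-1}\in m\bez$), Lemma \ref{AminuskA} applied with $k=2^{M-1}$ and $z=2x_{M-1}$ produces $y_{M-1}\in C$ together with $x_j\in C$ for $j\in L_M$ satisfying $y_{M-1} - \sum_{j\in L_M} x_j = 2x_{M-1}$, which is exactly the defining equation for $y_{M-1}$. For the finitely many small layers where $2^{M-1}<K$, I would initialise by hand: the corresponding finite truncation of the system has the columns property (pairing each $y_n$ with any single $x_j$ for $j\in L_{n+1}$ gives a set of columns summing to zero, with the remaining columns easily accounted for), hence is finitely partition regular by Rado's theorem, and so has solutions in every central set --- in particular inside $D$.

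The main obstacle is keeping the induction on track, because Lemma \ref{AminuskA} as stated yields the new $x_j$'s in $C$ rather than in the refined set $D$ where the induction needs them. To handle this I would apply the lemma instead to $D$, obtaining constants $m',K'$ with $(m/2)\mid m'$, and arrange --- by replacing the initial $m$ with a suitable common multiple of $m$ and $m'$ --- that $2x_{M-1}\in m'\bez$ at every stage. This divisibility bookkeeping is the only delicate point, and is presumably exactly the content of the ``generalisation'' of Lemma \ref{AminuskA} announced for Section \ref{proof-of-AminuskA}. Once it is in place, the partition regularity claim follows at once from the standard fact that every finite colouring of $\ben$ has a central colour class.
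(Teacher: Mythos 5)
Your overall plan (build layer by layer, invoke Lemma \ref{AminuskA} at each stage, initialise the first few layers via a finite truncation satisfying the columns property) is close in spirit to the paper, but you have correctly identified a gap --- and your proposed patch does not close it. The difficulty is that you need to maintain the invariant $x_n \in D = C \cap (m/2)\ben$, yet Lemma \ref{AminuskA} applied to $D$ produces its own constant $m'$ with no useful divisibility relation to $m$. In fact the relation goes the wrong way: from the structure of the lemma (the relevant subgroup for $D-D$ is contained in that for $C-C$) one gets $m \mid m'$, not $m' \mid m$. So knowing $x_{M-1}$ is a multiple of $m/2$ tells you $m \mid 2x_{M-1}$, which does \emph{not} give $m' \mid 2x_{M-1}$. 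Replacing $m$ by $\mathrm{lcm}(m,m')$ is circular, since $D$, and hence $m'$, then changes; nothing guarantees the process stabilises. The generalisation in Section \ref{proof-of-AminuskA} does not rescue this: it sharpens the bound on $K$ but the constant $m$ still depends on the set being used.

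The paper sidesteps the invariant entirely by a small but decisive trick: instead of insisting that $x_{r+1}$ lie in a refined set, it sets $m-2$ of the \emph{new} variables $x_{2^{r+1}}, \ldots, x_{2^{r+1}+m-3}$ all equal to the already-chosen $x_{r+1}$. The partial sum $2x_{r+1} + (m-2)x_{r+1} = m\cdot x_{r+1}$ is then automatically in $m\cdot\bez$, with no divisibility assumption on $x_{r+1}$ at all, and Lemma \ref{AminuskA} with $k = 2^{r+1}-m+2 \geq K$ supplies $y_{r+1}$ and the remaining $k$ new variables in $C$. (The initial truncation only needs $2^{M+1} \geq m-2+K$ so that $k \geq K$ from the first inductive step onward.) So the only invariant carried through the induction is membership in $C$, and the bookkeeping problem you flagged never arises. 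I would suggest dropping the divisibility invariant and adopting the repetition trick; as it stands your inductive step does not go through.
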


\begin{proof}   Pick $m$ and $K$ as guaranteed for $C$ by Lemma \ref{AminuskA}.
Pick $M\in\ben$ such that $2^{M+1}\geq m-2+K$.  Let
$P$ be the $(M+1)\times 2^{M+1}$ matrix consisting of rows 
$0,1,\ldots,M$ and columns $0,1,\ldots,2^{M+1}-1$ of $A$ and
let $I_{M+1}$ be the $(M+1)\times(M+1)$ identity matrix.  Then
$\begin{pmatrix}P&-I_{M+1}\end{pmatrix}$ satisfies the
columns property.  Since, as we remarked earlier, any 
finite partition regular system of homogeneous linear equations has solutions
in any central set,  we may choose $\vec x=\langle x_i\rangle_{i=0}^{2^{M+1}-1}$
and $\vec y=\langle y_i\rangle_{i=0}^M$ in $C$ such that
$\begin{pmatrix}P&-I_{M+1}\end{pmatrix}\begin{pmatrix}\vec x\\
\vec y\end{pmatrix}=\vec 0$.
That is, for each $n\in\ohat{M}$, the equation 
$2x_n+x_{2^n}+x_{2^n+1}+\cdots+x_{2^{n+1}-1}=y_n$ holds.

Let $r\geq M$ and assume that we have chosen $\langle x_i\rangle_{i=0}^{2^{r+1}-1}$
and $\langle y_i\rangle_{i=0}^r$ in $C$ such that for each $n\in\ohat{r}$, the equation 
$2x_n+x_{2^n}+x_{2^n+1}+\cdots+x_{2^{n+1}-1}=y_n$ holds.
For $2^{r+1} \leq t \leq 2^{r+1}+m-3$, let $x_t=x_{r+1}$. Let $k=2^{r+1}-m+2$.
Then $k\geq K$ and 
\[
2x_{r+1}+x_{2^{r+1}}+x_{2^{r+1}+1}+\cdots+x_{2^{r+1}+m-3}=
m\cdot x_{r+1}\in m\cdot \bez
\]
so by Lemma \ref{AminuskA} we may pick  $y_{r+1}\in C$
and pick $x_t\in C$ for 
$2^{r+1}+m-2\leq t \leq 2^{r+2}-1$  such that
$2x_{r+1}+x_{2^{r+1}}+x_{2^{r+1}+1}+\cdots+x_{2^{r+2}-1}=y_{r+1}$.
\end{proof}

We shall see in Corollary \ref{strongcentral} that in fact one may choose
$\vec x$ and $\vec y$ in $C^\omega$ such that
$\begin{pmatrix}A&-I\end{pmatrix}\begin{pmatrix} \vec x\\ 
\vec y\end{pmatrix}=\vec 0$ and 
all entries of $\begin{pmatrix} \vec x \\ \vec y \end{pmatrix}$ are distinct.

The matrix of Theorem \ref{iskprn} has unbounded row sums.This motivates the
following. Let us say
that an $\omega\times\omega$ matrix $A$ with
entries from $\bez$ has {\it bounded row sums} if
$\{\sum_{j<\omega}|a_{i,j}|:i<\omega\}$ is bounded. 
 
We now see
that if $A$ is a matrix which which is partition regular and has bounded row 
sums then in fact some non-empty set of
columns must sum to zero. In fact, this is pretty much a direct copy of Rado's
original proof in the finite case.

\begin{theorem}\label{bounded} Let $A$ be an $\omega\times\omega$ matrix with
entries from $\bez$ having bounded row sums. If
$A$ is $KPR/\ben$ and $\vec c_i$ denotes column $i$ of $A$, then there
is some $J\subseteq\omega$ such that $\sum_{i\in J}\vec c_i=\vec 0$.
\end{theorem}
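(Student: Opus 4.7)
The plan is to imitate Rado's colouring argument that produces the first (zero-sum) column class in the finite case, using bounded row sums to convert a congruence modulo $p$ into an exact equality.

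Let $B\in\ben$ be an upper bound for $\sum_j|a_{i,j}|$ over all rows $i$, and fix a prime $p>B$. Colour $\ben$ with $p-1$ colours by sending each $n\in\ben$ to its leading $p$-adic digit $n/p^{v_p(n)}\bmod p\in\{1,\ldots,p-1\}$, where $v_p$ denotes the $p$-adic valuation. Since $A$ is partition regular, there is a monochromatic $\vec x\in\ben^\omega$ with $A\vec x=\vec 0$. Write $x_j=p^{e_j}n_j$ with $p\nmid n_j$ and $n_j\equiv r\pmod p$ for some common $r\in\{1,\ldots,p-1\}$, and set $J=\{j<\omega:e_j=\min_k e_k\}$, a nonempty subset of $\omega$.

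Now for each row $i$ the equation $\sum_j a_{i,j}x_j=0$ is a finite sum. Dividing by $p^{\min_k e_k}$ and reducing modulo $p$ kills every term with $e_j$ strictly above the minimum, leaving $r\sum_{j\in J}a_{i,j}\equiv 0\pmod p$. As $\gcd(r,p)=1$ we obtain $p\mid\sum_{j\in J}a_{i,j}$; but the absolute value of this integer is at most $B<p$, so it must in fact equal $0$. Hence $\sum_{j\in J}\vec c_j=\vec 0$, which is the desired conclusion.

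I do not anticipate a real obstacle, since this is essentially Rado's original finite-case argument. The only features specific to the infinite setting are the mild observations that $J$ is nonempty (the nonnegative integers $\{e_j\}$ attain their minimum) and that each coordinate of $\sum_{j\in J}\vec c_j$ is a well-defined finite sum (every row of $A$ has finite support); it is precisely the bounded-row-sums hypothesis that lets a single choice of $p$ work uniformly across all rows.
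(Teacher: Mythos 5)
Your proof is correct and essentially identical to the paper's: both pick a prime exceeding the row-sum bound, colour by the least significant nonzero base-$p$ digit, extract a monochromatic solution, let $J$ be the set of indices attaining the minimum $p$-adic valuation, and use the row-sum bound to upgrade the resulting divisibility to an exact equality. The paper even notes, as you do, that this is a direct copy of Rado's finite-case argument.
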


\begin{proof} Pick a prime $q$ such that $\sum_{j<\omega}|a_{i,j}|<q$ for
each $i<\omega$.  Given $x\in\ben$, pick $b(x)\in\nhat{q-1}$, $l(x)\in\omega$ 
and $a(x)\in\omega$ such that $x=b(x)q^{l(x)}+a(x)q^{l(x)+1}$.  (Thus
$b(x)$ is the rightmost nonzero digit in the base $q$ expansion of $x$ 
and $l(x)$ is the position of that digit.)  For $b\in\nhat{q-1}$, let
$C_b=\{x\in\ben:b(x)=b\}$.  Pick $b\in\nhat{q-1}$ and 
$\vec x\in C_b^\omega$ such that $A\vec x=\vec 0$.  Let $d=\min\{l(x_i):i<\omega\}$
and let $J=\{i<\omega:l(x_i)=d\}$.  We shall show that $\sum_{j\in I}\vec c_j=\vec 0$.

For $j\in J$, let $e_j=a(x_j)$ so that $x_j=bq^d+e_jq^{d+1}$.  For $j\in\omega\setminus J$,
$l(x_j)>d$ so pick $e_j\in\ben$ such that $x_j=e_jq^{d+1}$.
Suppose that $\sum_{j\in J}\vec c_j\neq \vec 0$ and pick some $i<\omega$
such that $\sum_{j\in J}a_{i,j}\neq 0$. Then
$0=\sum_{j\in J}bq^{d}a_{i,j}+\sum_{j<\omega}e_jq^{d+1}a_{i,j}$ so
$q$ divides $\sum_{j\in J}ba_{i,j}$ so 
$q$ divides $\sum_{j\in J}a_{i,j}$ while $q>\sum_{j\in J}|a_{i,j}|$, a contradiction.
\end{proof} 

\begin{remark} A matrix $A$ which satisfies the weaker condition that 
$\{|\sum_{j<\omega}a_{i,j}|:i<\omega\}$ is bounded 
could be partition regular and have no non-empty set of columns summing to 
zero. If
\[
A=\begin{pmatrix}
-2&1&0&0&0&0&0&0&\cdots\\
0&-3&1&1&0&0&0&0&\cdots\\
0&0&-4&0&1&1&1&0&\cdots\\
\vdots&\vdots&\vdots&\vdots&\vdots&\vdots&\vdots&\vdots&\ddots
\end{pmatrix}
\]
then  $\begin{pmatrix}A&-I\end{pmatrix}$ is partition regular; the proof is essentially the same as that of
Theorem \ref{iskprn}. Clearly, there is no non-empty set of columns of $\begin{pmatrix}A & -I\end{pmatrix}$ whose sum is zero.
\end{remark}

Very frustratingly, we have been unable to extend Theorem \ref{bounded} to 
the following.

\begin{conjecture}\label{colsbounded} Let $A$ be an $\omega\times\omega$ matrix with
entries from $\bez$, and assume
that $A$ is partition regular. If $A$ has bounded row sums, then $A$ has the columns property.
\end{conjecture}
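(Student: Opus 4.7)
The plan is to mimic Rado's original proof in the finite case, extracting the blocks $I_0, I_1, I_2, \ldots$ of the desired columns-property partition one at a time, with Theorem~\ref{bounded} serving as the engine at each stage. The first block $I_0$ comes directly from Theorem~\ref{bounded}: it is a nonempty subset of $\omega$ whose corresponding columns of $A$ sum to $\vec 0$.

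For the inductive step, assume disjoint nonempty blocks $I_0, \ldots, I_{t-1}$ satisfying the first $t$ columns-property conditions have been constructed; set $S = \bigcup_{s<t} I_s$ and $W = \mathrm{span}_\beq(\vec c_i : i \in S) \subseteq \beq^\omega$. I would then form the \emph{quotient matrix} $A^{(t)}$ whose columns are the images $\pi(\vec c_i)$, $i \in \omega \setminus S$, under the projection $\pi : \beq^\omega \to \beq^\omega/W$. Partition regularity of $A^{(t)}$ over $\ben$ is inherited from $A$: any monochromatic $\vec x \in \ben^\omega$ with $A\vec x = \vec 0$ restricts on $\omega \setminus S$ to a monochromatic solution of $A^{(t)}\vec y = \vec 0$, because $\pi(\vec c_i) = 0$ for $i \in S$. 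A nonempty zero-sum subset of columns of $A^{(t)}$ corresponds precisely to a set $I_t \subseteq \omega \setminus S$ with $\sum_{i \in I_t} \vec c_i \in W$, which is the required columns-property condition for block $I_t$. Given that Theorem~\ref{bounded} can be applied to $A^{(t)}$, the induction proceeds.

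To ensure that the partition covers all of $\omega$ within $\omega$ stages, I would aim to strengthen Theorem~\ref{bounded} to a \emph{pointed} version: for any prescribed index $j^* \in \omega$ there exists a zero-sum set of columns containing $j^*$. Invoking this pointed lemma at each stage on the least uncovered index would force exhaustion of $\omega$. A plausible route is to enrich the base-$q$ colouring used in the proof of Theorem~\ref{bounded} with an auxiliary colour computed from $l(x)$, designed so that the monochromatic solution is forced to place $x_{j^*}$ at the minimum $q$-adic valuation; but it is not clear this can be engineered without the colouring ``knowing'' the index $j^*$.

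The main obstacle is realising the quotient matrix $A^{(t)}$ as an $\omega \times \omega$ matrix with \emph{integer} entries and \emph{bounded row sums}, as demanded by the hypothesis of Theorem~\ref{bounded}. Choosing a basis for $\beq^\omega / W$ in which the projected columns enjoy both properties is not automatic in infinite dimensions: the finite-dimensional tool of Smith normal form has no straightforward infinite analogue that simultaneously yields an integer structure and preserves row-sum bounds, and a poor choice of complement of $W$ would easily wreck the bound. Together with the pointed strengthening of Theorem~\ref{bounded}, this quotient-regularity step is where I expect the real difficulty to lie, and where the authors' own attempts presumably stalled.
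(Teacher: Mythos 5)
The statement you have set out to prove is Conjecture~\ref{colsbounded}; the paper presents it as open, with the immediately preceding remark ``Very frustratingly, we have been unable to extend Theorem~\ref{bounded} to the following.'' There is therefore no proof in the paper to check against, and a complete argument would be a genuinely new result rather than a reconstruction.

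That said, your sketch is a sensible transcription of Rado's finite-case induction, and you have correctly located the two points at which it breaks. First, to reapply Theorem~\ref{bounded} to the quotient system you must present $A^{(t)}$ as an $\omega\times\omega$ matrix with integer entries, finitely many nonzero entries per row, and bounded row sums; but $\beq^\omega/W$ carries no canonical coordinates, and choosing a complement of $W$ can destroy integrality, finiteness of row support, and the row-sum bound all at once. In the finite case Rado can row-reduce against a finite-dimensional $W$; here $W$ may be infinite-dimensional and its generating columns may have support spread across infinitely many rows, so there is no obvious Smith-normal-form surrogate. Second, even granting the induction can be run, nothing forces the blocks $I_0,I_1,\ldots$ to exhaust $\omega$ within $\omega$ steps, which is exactly what Definition~\ref{defccinf} requires; this is precisely why the paper retreats to Conjecture~\ref{weakercolsbounded} in terms of the transfinite columns property. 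Your proposed ``pointed'' strengthening of Theorem~\ref{bounded} is a reasonable idea for the second obstacle, but as you yourself note, the base-$q$ colouring in that proof produces a zero-sum set determined by where the minimum $q$-adic valuation occurs in the monochromatic solution, and there is no evident way to steer that minimum to a prescribed coordinate $j^*$ with a colouring fixed in advance. Your diagnosis of where the difficulty lies agrees with the authors' own; neither you nor they resolve it.
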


If this conjecture is false, might at least a weaker statement be true, that
$A$ must satisfy the natural `transfinite' version of the columns property?

\begin{definition}\label{defccinfb} Let $u,v\in\ben\cup\{\omega\}$ 
and let $A$ be a $u\times v$ matrix with entries 
from $\beq$.  Denote the columns of $A$ by $\langle\vec c_i\rangle_{i<v}$.
  The matrix $A$ satisfies the {\it transfinite columns property\/} if and only if
there exist a countable ordinal $\mu$ and a partition $\langle I_\sigma\rangle_{\sigma<\mu}$ of 
$v$ such that
\begin{itemize}
\item[(1)] $\sum_{i\in I_0}\vec c_i=\vec 0$ and
\item[(2)] for each $t\in \mu\setminus\{0\}$,  
$\sum_{i\in I_t}\vec c_i$ is a linear combination with coefficients
from $\beq$ of $\{\vec c_i:i\in\bigcup_{j<t}I_j\}$.
\end{itemize}
\end{definition}

We remark that, for any given countable ordinal $\mu$, it is a straightforward exercise to write down
a matrix that has the transfinite columns property with the partition being
indexed by $\mu$, but without any partition having a smaller indexing ordinal.

Our weaker version of Conjecture \ref{colsbounded} would then be:

\begin{conjecture}\label{weakercolsbounded} Let $A$ be an 
$\omega\times\omega$ matrix with
entries from $\bez$ and assume
that $A$ is partition regular. If $A$ has bounded row sums, then $A$ has the transfinite columns property.
\end{conjecture}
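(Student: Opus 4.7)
The plan is to prove the conjecture by transfinite induction, at each stage peeling off a new block of the desired partition. At stage $t$, assume we have chosen disjoint non-empty sets $I_\sigma \subseteq \omega$ for $\sigma<t$ satisfying clauses (1) and (2) of Definition \ref{defccinfb} up to $t$. Write $T_t=\bigcup_{\sigma<t}I_\sigma$, $S_t=\omega\setminus T_t$, and $W_t=\mathrm{span}_{\beq}\{\vec c_i:i\in T_t\}$. If $S_t=\emptyset$ we stop; otherwise we must find a non-empty $I_t\subseteq S_t$ with $\sum_{i\in I_t}\vec c_i\in W_t$. Since the $I_\sigma$ are non-empty and disjoint, this process terminates at some countable ordinal $\mu$.

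The first observation is that the restricted system ``$A|_{S_t}$ modulo $W_t$'' inherits a useful partition-regularity property from $A$: for any finite colouring of $\ben$, partition regularity of $A$ produces a monochromatic $\vec x\in\ben^\omega$ with $A\vec x=\vec 0$, and then $\vec y:=\vec x|_{S_t}$ is a monochromatic element of $\ben^{S_t}$ satisfying $\sum_{i\in S_t}y_i\vec c_i=-\sum_{k\in T_t}x_k\vec c_k\in W_t$. The induction step therefore reduces to the following relative form of Theorem \ref{bounded}: \emph{if an $\omega\times\omega$ integer matrix with bounded row sums is partition regular modulo a column-spanned subspace $W$, then some non-empty set $J$ of unused columns has $\sum_{j\in J}\vec c_j\in W$.}

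For this relative statement I would try to adapt the $q$-adic colouring from the proof of Theorem \ref{bounded}. Fix a prime $q$ exceeding the row-sum bound, colour $x\in\ben$ by the rightmost nonzero base-$q$ digit $b(x)$, and pick a monochromatic solution $\vec x\in\ben^\omega$. Let $d_S=\min_{i\in S_t}l(x_i)$, $d_T=\min_{k\in T_t}l(x_k)$, and $J_S=\{i\in S_t:l(x_i)=d_S\}$. If $d_S<d_T$, then reducing each row of $A\vec x=\vec 0$ modulo $q^{d_S+1}$ annihilates every contribution of the $T_t$-variables, and the argument of Theorem \ref{bounded} forces $\sum_{j\in J_S}\vec c_j=\vec 0$, so $I_t:=J_S$ works (trivially with sum in $W_t$). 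Thus the only case that needs attention is $d_T\leq d_S$.

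The hard part, and presumably the reason the conjecture is open, is forcing $d_S<d_T$. The colouring controls only the trailing base-$q$ digit of each variable, never its valuation, so nothing prevents the minimum-valuation component of a monochromatic solution from falling in $T_t$. Two natural routes to try. First, refine the colouring by a suitable coarsening of $l(x)$ along the lines of Rado's original finite-case necessity proof, the hope being that any monochromatic solution under such a colouring has its valuations arranged in a tower whose strata can be peeled off one block at a time. Second, argue by contradiction via a quotient analogue of Rado's necessity argument: if no non-empty $J\subseteq S_t$ has $\sum_{j\in J}\vec c_j\in W_t$, then the columns $\vec c_i$ for $i\in S_t$, viewed modulo $W_t$, admit no non-empty subset summing to zero, and one would parlay this into a finite colouring of $\ben$ with no monochromatic solution modulo $W_t$, contradicting the observation above. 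The technical difficulty in either approach is that the bounded-row-sum hypothesis does not pass cleanly to the quotient, so the inequality $\sum_j|a_{i,j}|<q$ underpinning Theorem \ref{bounded} must be re-established in the quotient setting by hand.
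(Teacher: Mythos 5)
This statement is a conjecture in the paper, not a theorem. The authors explicitly say they have been unable to extend Theorem \ref{bounded} to Conjecture \ref{colsbounded}, and pose Conjecture \ref{weakercolsbounded} as an avowedly weaker fallback that is also left open. There is therefore no proof in the paper to compare against, and a genuine proof would be new.

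Your framework is a reasonable reduction: peel off blocks $I_t$ transfinitely, noting that a partition of $\omega$ into non-empty disjoint sets must terminate at a countable ordinal, and that at each stage the restriction $\vec x|_{S_t}$ of a monochromatic global solution has weighted column sum in $W_t$. One small improvement to your case analysis: the case $d_S = d_T = d$ is also fine. The mod-$q^{d+1}$ argument of Theorem \ref{bounded} applied to the full solution gives $\sum_{j\in J_S\cup J_T}\vec c_j=\vec 0$, so $\sum_{j\in J_S}\vec c_j=-\sum_{k\in J_T}\vec c_k$, which is a ($-1$)-coefficient combination of already-used columns, hence lies in $W_t$, and $J_S$ is non-empty because $d_S$ is attained. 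So the only problematic case is strictly $d_T<d_S$.

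But that case is the heart of the problem, and you rightly flag that nothing in the colouring controls where the minimum valuation of a monochromatic solution falls. Each application of the $q$-adic colouring produces a fresh solution bearing no relation to the previously committed $T_t$-coordinates, and one cannot arrange a single solution that serves at every stage of the transfinite induction. The two routes you sketch (a refined Rado-style colouring on the valuation, or a quotient argument) are not carried out, and you candidly note the technical obstruction that the bounded-row-sum hypothesis does not descend to the quotient. So the proposal is an honest outline that isolates the difficulty but does not constitute a proof, which is consistent with the statement being open. If you do want to push on this, the cleanest target would be the relative form of Theorem \ref{bounded} you state in the middle of your proposal; a proof or a counterexample to that relative statement would be a real advance.
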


We saw in the Introduction that the system of equations $x_n-x_{n+1}=y_n$
satisfies the columns property but is not partition regular over $\ben$.  
However, as shown at the end of Section~\ref{proof-of-AminuskA} of \cite{HLS}, it is partition
regular over $\ber$ --- meaning that whenever $\ber \setminus \{ 0 \}$ is
finitely coloured there is a monochromatic solution to that system of
equations. (It was also shown there that it is not partition 
regular over $\beq$.)  This leaves open the possibility that perhaps a sufficiently
well behaved system of equations which satisfies the columns property
must at least be partition regular over $\ber$. To end this section, we show
that this seems not to be the case. The system $x_n+2x_{n+1}=y_n$ is about
as well behaved as one can wish.  The sum of absolute values of entries
of each row of the coefficient matrix is $4$ and all columns sum to
$1$, $-1$, or $3$. The matrix equation is
\[
\begin{pmatrix}
1&-1&2&0&0&0&0&0&\cdots\\
0&0&1&-1&2&0&0&0&\cdots\\
0&0&0&0&1&-1&2&0&\cdots\\
\vdots&\vdots&\vdots&\vdots&\vdots&\vdots&\vdots&\vdots&\ddots\end{pmatrix}
\begin{pmatrix}x_0\\y_0\\x_1\\y_1\\
\vdots\end{pmatrix}=\vec 0.
\]
The matrix of coefficients satisfies the columns property with $I_i=\{2i,2i+1\}$ for
each $i<\omega$.

\begin{theorem}\label{notkprr} The system of equations $x_n+2x_{n+1}=y_n$ is not
partition regular over $\ber$.\end{theorem}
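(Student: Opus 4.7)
The plan is to exhibit a finite colouring of $\mathbb{R}\setminus\{0\}$ that admits no monochromatic solution to the system. I would first colour by sign: any monochromatic solution must have all $x_n,y_n$ of a common sign, and by the symmetry $x\mapsto -x$ it suffices to produce a colouring of $(0,\infty)$ that blocks every positive solution.

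For positive solutions, summing the first $N$ equations yields
\[
\sum_{n=0}^{N-1} y_n \;=\; x_0 + 2x_N + 3\sum_{n=1}^{N-1} x_n,
\]
so if all $x_n, y_n$ lie in an interval $[a,b]$ then $Nb\ge 3Na$ and hence $b\ge 3a$. Consequently any colour class contained in a single interval of ratio strictly less than $3$ is automatically solution-free, which motivates partitioning $(0,\infty)$ into finitely many classes whose maximal log-contiguous strips each have ratio less than $3$. A first candidate is the four-colouring of $\mathbb{R}\setminus\{0\}$ by sign combined with $\lfloor\log_3|x|\rfloor\bmod 2$, which already kills every constant solution $x_n=c$, $y_n=3c$, since $\lfloor\log_3(3c)\rfloor = 1+\lfloor\log_3 c\rfloor$ lies in the opposite residue class.

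The main obstacle is the descent case: for a geometric sequence $x_n=r^n$ with $0<r<1$ small, $y_n=(1+2r)x_n$ has $\log_3(1+2r)$ arbitrarily small, so $x_n$ and $y_n$ can share a log-strip, and one can align the $x_n$'s themselves with any log-periodic colouring by choosing $r=3^{-N}$ for a mod-$N$ scheme, yielding a monochromatic descending solution. To rule out every such descent I would augment the log-periodic base colouring by an aperiodic refinement of the form $\lfloor \alpha\log_2|x|\rfloor\bmod 2$ for an appropriately chosen irrational $\alpha$, so that Weyl equidistribution forces, for each $r\neq 1$, the sequence of colours of $x_n$ to visit both residues. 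The most delicate step, which I expect to be the real technical heart of the argument, is showing that the combined finite colouring blocks \emph{every} positive solution, not just the geometric ones; I anticipate this requires a careful case analysis organised by the growth rate of $x_{n+1}/x_n$ along the sequence, using the averaging identity above to rule out bounded behaviour and the aperiodic refinement to rule out the unbounded cases.
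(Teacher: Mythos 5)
Your proposal takes a fundamentally different route from the paper's proof, and it has a genuine gap at exactly the point you flag as ``the real technical heart'': you do not exhibit a colouring and prove it blocks all solutions. The difficulty is more serious than your sketch suggests. The summation identity only shows that a solution cannot be confined to a single interval of ratio below $3$, which says nothing once a colour class is a union of many log-strips; and your $\lfloor \alpha \log_2 |x| \rfloor \bmod 2$ refinement is tailored to geometric solutions $x_n = r^n$ via equidistribution, but a general monochromatic solution need not be geometric, and there is no reason the colours of $x_n$ should behave like a Weyl sequence. In fact, for a colour class $C$ that is a union of log-strips of positive logarithmic density, it looks plausible that one could greedily choose $x_n\in C$ with $x_n>2x_{n+1}$ and $x_n+2x_{n+1}\in C$: both constraints cut out sets of positive relative density in $(0,x_n/2)$, and you have not argued that these sets can be made disjoint. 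So it is far from clear that the colouring you have in mind is solution-free, and no proof is offered.

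The paper proceeds quite differently. It first proves non-partition-regularity over $\beq^+$. The colouring $\tau(x)=\lfloor\log_2 x\rfloor\bmod 3$ forces $y_n>4x_{n+1}$ whenever $\tau(y_n)=\tau(x_{n+1})$, hence $x_n=y_n-2x_{n+1}>2x_{n+1}$, which drives the solution into $(0,1)$. The crucial extra ingredient, with no analogue in your proposal, is the factorial-base expansion $x=\sum_{t\geq 2}a(x,t)/t!$ of a rational in $(0,1)$: the colouring records $m(x)\bmod 3$ (with $m(x)$ the highest nonzero digit position) and a $3$-colouring $\nu_{m(x)}$ of the leading digit chosen so that $i$ and $2i\bmod t$ always receive different colours; a digit-and-carry analysis then yields a contradiction. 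This step is intrinsically arithmetic and exploits the terminating factorial-base expansion of rationals, with no evident real-number analogue. The passage from $\beq$ to $\ber$ is then made not by constructing an explicit colouring of $\ber$ but by choosing a well-ordered Hamel basis $B$ for $\ber$ over $\beq$ and colouring each real according to data attached to its leading coefficient $\alpha\big(x,b(x)\big)\in\beq$: a monochromatic real solution is shown to have eventually constant leading basis element, and the leading coefficients then give a monochromatic rational solution, a contradiction. In particular the paper's colouring of $\ber$ is highly non-Borel, so your plan to build an explicit Borel colouring from log-periodic strips plus Weyl equidistribution would need a genuinely new idea, if it can be made to work at all.
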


\begin{proof} We show first that the system is not partition regular over $\beq$, for
which it suffices to show that it is not partition regular over $\beq^+=\{x\in\beq:x>0\}$.
We begin by defining $\tau:\beq^+\to\{0,1,2\}$ by $\tau(x)=i$ if and only
if $\lfloor \log_2(x)\rfloor\equiv i\ \pmod 3$.  Note that if
$\tau(x)=\tau(y)$ and $y\geq 2x$, then $y>4x$.  (To see this, let
$k=\lfloor \log_2(x)\rfloor$ and $m=\lfloor \log_2(y)\rfloor$. Then
$2^{m+1}>y\geq 2x\geq 2^{k+1}$ so $m>k$ and therefore 
$m\geq k+3$.  Then $y\geq 2^m\geq 2^{k+3}>4x$.)

As is relatively well known, and at any rate easy to verify,
every rational $x\in(0,1)$ has a unique expansion of the form
\[
x = \sum_{t=2}^{m(x)} \frac{a(x,t)}{t!}
\]
where each $a(x,t)\in\ohat{t-1}$ and $a\big(x,m(x)\big)>0$. 
If $t>m(x)$, let $a(x,t)=0$. For
each $t\geq 2$, choose $\nu_t:\nhat{t-1}\to\{0,1,2\}$ such that,
if $i,j\in\nhat{t-1}$ and $j\equiv 2i\ \pmod t$, then
$\nu_t(i)\neq\nu_t(j)$.  (To see that such a colouring exists, let $G$ be the digraph on vertex set $\nhat{t-1}$ with an edge from $i$ to $j$ whenever $j \equiv 2i \pmod t$.  Then every edge of $G$ is in at most one cycle, so $G$ can be 3-coloured.)

Now define a finite colouring $\varphi$ of $\beq^+$ so that
for $x,y\in \beq^+$, $\varphi(x)=\varphi(y)$ if and only if
\begin{itemize}
\item[(1)] $\tau(x)=\tau(y)$;
\item[(2)] $x<1$ if and only if $y<1$; and
\item[(3)] if $x<1$ and $y<1$ then
  \begin{itemize}
    \item[(a)] $m(x)\equiv m(y)\ \pmod 3$ and
    \item[(b)] if $m(x)=m(y)$, then $\nu_{m(x)}\big(a(x,m(x)\big)\big)=
               \nu_{m(y)}\big(a(y,m(y)\big)\big)$.
  \end{itemize}
\end{itemize}

Now suppose we have a system $x_n+2x_{n+1}=y_n$ for $n<\omega$ such
that $\{x_n:n<\omega\}\cup\{y_n:n<\omega\}$ is monochromatic with
respect to $\varphi$.  Given $n$, we have that $y_n>2x_{n+1}$ so,
since $\tau(y_n)=\tau(x_{n+1})$, we have $y_n>4x_{n+1}$.  But then
$x_n=y_n-2x_{n+1}>2x_{n+1}$.  Therefore the sequence
$\langle x_n\rangle_{n=0}^\infty$ is eventually less than $1$ and
therefore $\{x_n:n<\omega\}\cup\{y_n:n<\omega\}\subseteq(0,1)$.
Given any $k$, $\{x\in(0,1):m(x)\leq k\}$ is finite and therefore,
there is some $n$ such that $m(x_{n+1})>m(x_n)$ and consequently
$m(x_{n+1})\geq m(x_n)+3$.  
Let $m=m(x_{n+1})$, let $(d,c,b)=\big(a(x_{n+1},m-2),a(x_{n+1},m-1),a(x_{n+1},m)\big)$,
and let $(d',c',b')=(a(y_n,m-2),a(y_n,m-1),a(y_n,m)\big)$. Now
$b'\equiv 2b\ \pmod m$ so, since $\varphi(y_n)=\varphi(x_{n+1})$ we
must have that $b'=0$ and therefore $m(y_n)<m(x_{n+1})=m$.
Thus $m(y_n)\leq m-3$ and so $d'=c'=0$.  Since $0\equiv 2b\ \pmod m$ we
must have that $m=2b$.  Since there is a carry out of position $m$
when $2x_{n+1}$ is computed,  $0=c'\equiv 2c+1\ \pmod {m-1}$ and so
there is a carry out of position $m-1$.  But then
$0=d'\equiv 2d+1\ \pmod {m-2}$, which is impossible since $m-2$ is even.

Thus we have established that our system is not partition regular over
$\beq^+$. Extend the colouring $\varphi$ to a colouring $\varphi'$ of 
$\beq\setminus\{0\}$ with respect to which there is no
monochromatic system (e.g. by reflecting to $\{x \in \beq : x < 0\}$ using a new set of colours).

 Now pick a Hamel basis $B$ for $\ber$ over $\beq$ and
well-order $B$.  For $x\in\ber\setminus\{0\}$ let $\supp(x)$ be the finite non-empty
subset of $B$ such that 
$x=\sum_{b\in\supp(x)}\alpha(x,b)b$ and each $\alpha(x,b)\in\beq\setminus\{0\}$
and let $b(x)=\max\supp(x)$.

Now let $\psi:\beq\setminus\{0\}\to\{0,1\}$ such that for all $x\in\beq\setminus\{0\}$,
$\psi(2x)\neq\psi(x)$. (For example, colour by the parity of the number of factors of $2$ in 
the numerator or denominator of $x$.)  Now define a finite colouring $\gamma$ of
$\ber\setminus\{0\}$ so that $\gamma(x)=\gamma(y)$ if and only if
\begin{itemize}
\item[(1)] $\alpha\big(x,b(x)\big)>0$ if and only if $\alpha\big(y,b(y)\big)>0$;
\item[(2)] $\psi\big(\alpha\big(x,b(x)\big)\big)=\psi\big(\alpha\big(y,b(y)\big)\big)$; and
\item[(3)] $\varphi'\big(\alpha\big(x,b(x)\big)\big)=\varphi'\big(\alpha\big(y,b(y)\big)\big)$.
\end{itemize} 

Now suppose we have a system $x_n+2x_{n+1}=y_n$ in $\ber$ for $n<\omega$ such
that $\{x_n:n<\omega\}\cup\{y_n:n<\omega\}$ is monochromatic with
respect to $\gamma$.  If for any $n$ we have $b(x_{n+1})>b(x_n)$, then
we get $b(y_n)=b(x_{n+1})$ and $\alpha\big(y_n,b(y_n)\big)=2\alpha\big(x_{n+1},b(x_{n+1})\big)$,
contradicting requirement (2).  Therefore, for each $n$, 
$b(x_{n+1})\leq b(x_n)$.  Since there are no infinite strictly decreasing sequences in $B$, there
are some $b\in B$ and $k<\omega$ such that for all $n\geq k$, $b(x_n)=b$.  
Note that by requirement (1), for all such $n$, $b(y_n)=b$.
Now let for each $n<\omega$, $x_n'=\alpha(x_{k+n},b)$ and $y_n'=\alpha(y_{k+n},b)$.
Then $\{x_n':n<\omega\}\cup\{y_n':n<\omega\}$ is monochromatic with respect to $\varphi'$ and
for each $n$, $x_n+2x_{n+1}=y_n$, a contradiction.
\end{proof}

\section{Proof of Lemma \ref{AminuskA}} \label{proof-of-AminuskA}

In this section we will provide a proof of a generalisation of
Lemma \ref{AminuskA}. We also prove a strengthening of Theorem \ref{iskprn}.
These
results utilise the algebraic structure of the Stone--\v Cech
compactification $\beta S$ of a discrete semigroup $S$.  We shall provide
a brief introduction to this structure now. For proofs of the
assertions made here, see the first four chapters of \cite{HS}.
We shall be concerned here almost exclusively with commutative semigroups,
so we will denote the operation of $S$ by $+$. We mention here that the reader
who is not interested in the results for general groups or semigroups can skip 
Lemma \ref{dense} and its
proof and rejoin the text at Lemma \ref{syndetic}, taking $G$ to be $\bez$ and 
$d$ to be the usual density for subsets of $\bez$.

We take the points of $\beta S$ to be the ultrafilters on $S$,
the principal ultrafilters being identified with the points of $S$,
whereby we pretend that $S\subseteq\beta S$.
Given a subset $A$ of $S$, $\overline A=\{p\in\beta S:A\in p\}$ is
the closure of $A$ in $\beta S$ and $\{\overline A:A\subseteq S\}$ is 
a basis for the open sets of $\beta S$ as well as a basis for the closed
sets of $\beta S$.

The operation on $S$ extends to an operation on $\beta S$, also
denoted by $+$, with the property that for each $p\in\beta S$,
the function $\rho_p:\beta S\to\beta S$ is continuous and for
each $x\in S$, the function $\lambda_x:\beta S\to\beta S$ is continuous 
where, for $q\in \beta S$, $\rho_p(q)=q+p$ and $\lambda_x(q)=x+q$.
(The reader should be cautioned that $(\beta S,+)$ is almost 
certain to not be commutative; the centre of $(\beta\ben,+)$ is
$\ben$.) Given $A\subseteq S$ and $p,q\in\beta S$, $A\in p+q$
if and only if $\{x\in S:-x+A\in q\}\in p$, where $-x+A=\{y\in S:x+y\in A\}$.

Thus $(\beta S,+)$ is a compact Hausdorff right topological 
semigroup.  As with any such object, there exist idempotents
in $\beta S$.  Also, $\beta S$ has a smallest two sided ideal,
$K(\beta S)$, which is the union of all of the minimal
right ideals of $\beta S$ as well as the union of all of the minimal
left ideals of $\beta S$.  The intersection of any minimal right
ideal with any minimal left ideal is a group and any two such groups
are isomorphic. In particular, there are idempotents in $K(\beta S)$.
A subset $C$ of $S$ is said to be {\it central\/} if and only if
it is a member of some idempotent in $K(\beta S)$. (Such 
idempotents are said to be {\it minimal\/}.) Since whenever
the union of finitely many sets is a member of an ultrafilter, one of the sets must itself a member of the ultrafilter,
 one has immediately that whenever $S$ is 
finitely coloured, at least one colour class must be central.
A subset $A$ of $S$ is said to be {\it central*\/} if and only if
it has non-empty intersection with any central set, equivalently if
and only if it is a member of every minimal idempotent.

Given a set $X$, let $\pf(X)$ be the set of finite non-empty subsets
of $X$. A subset $A$ of $S$ is {\it syndetic\/} if and only if there
is some $L\in\pf(S)$ such that $S=\bigcup_{t\in L}(-t+A)$.
A subset $A$ of $S$ is {\it piecewise syndetic\/} if and only if
\[
\textstyle\big(\exists G\in\pf(S)\big)\big(\forall F\in\pf(S)\big)\big(\exists x\in S\big)
\big(F+x\subseteq \bigcup_{t\in G}(-t+A)\big).
\]
It is then a fact that $A$ is piecewise syndetic if and only if 
$\overline A\cap K(\beta S)\neq\emp$.  In particular,
every central set is piecewise syndetic.

\begin{lemma}\label{dense} Let $(S,+)$ be a commutative cancellative semigroup.
There is a function
$d:{\mathcal P}(S)\to[0,1]$ with the following properties.
\begin{itemize}
\item[(1)] $d(S)=1$.
\item[(2)] For each piecewise syndetic subset $A$ of $S$, $d(A)>0$.
\item[(3)] For $B,C\subseteq S$, $d(B\cup C)\leq d(B)+d(C)$.
\item[(4)] For $a\in S$ and $B\subseteq S$, $d(B)=d(a+B)=d(-a+B)$.
\item[(5)] If $B\subseteq S$, $n\in\ben$, and $x_1, x_2,\ldots,x_n\in S$ 
such that $(x_i+B)\cap (x_j+B)=\emp$ if $i\neq j$, then $d\big(\bigcup_{i=1}^n(x_i+B)\big)=nd(B)$.
\end{itemize}\end{lemma}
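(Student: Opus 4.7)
The plan is to construct $d$ as an upper Banach density on the group of quotients of $S$. Since $S$ is commutative and cancellative, it embeds in an abelian group $G$ in which every element has the form $s-t$ with $s,t\in S$. Because $G$ is abelian it is amenable, so it admits a F{\o}lner net $(F_\alpha)_{\alpha\in D}$ of finite subsets. I would fix an ultrafilter $\mathcal{U}$ on $D$ refining the tail filter and, for $A\subseteq S$, set
\[
d(A)=\mathcal{U}\text{-}\lim_\alpha\sup_{x\in G}\frac{|A\cap(x+F_\alpha)|}{|F_\alpha|}.
\]
Properties (3) and (4) should then be automatic: $\sup_x$ and $\mathcal{U}$-limits are both monotone and subadditive, while the substitution $x\mapsto x-a$ yields translation invariance (the set-theoretic issue of $-a+B$ potentially straying outside $S$ is handled by restricting to witnesses $x$ with $x+F_\alpha\subseteq S$). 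For (1), any finite $F\subseteq G$ may be written as $\{s_i-t_i\}_{i\le k}$ with $s_i,t_i\in S$; then $x=t_1+\cdots+t_k\in S$ gives $x+F\subseteq S$, so when $F=F_\alpha$ the supremum equals $1$ and hence $d(S)=1$.

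For (2), I would exploit piecewise syndeticity directly. Suppose $A$ is piecewise syndetic, witnessed by $\{t_1,\ldots,t_m\}\in\pf(S)$. Apply the piecewise syndetic condition to a translate of $F_\alpha$ sitting inside $S$ (which exists by the argument for (1)), obtaining $z_\alpha\in S$ with $z_\alpha+F_\alpha\subseteq\bigcup_i(-t_i+A)$. Pigeonholing over the $m$ summands produces an index $i_\alpha$ with $|(-t_{i_\alpha}+A)\cap(z_\alpha+F_\alpha)|\ge|F_\alpha|/m$. Since there are only $m$ possible values, some fixed $i^*$ occurs for $\mathcal{U}$-many $\alpha$, whence $d(-t_{i^*}+A)\ge 1/m$, and translation invariance yields $d(A)\ge 1/m>0$.

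The main obstacle is property (5), which is where the F{\o}lner property really pulls its weight. The upper bound $d\bigl(\bigcup_i(x_i+B)\bigr)\le n\,d(B)$ is immediate from subadditivity. For the matching lower bound, fix $\varepsilon>0$ and restrict to $\alpha$ for which $F_\alpha$ is $\varepsilon$-invariant under each of the finitely many group elements $x_i-x_1$ ($1\le i\le n$); then pick $y_\alpha\in G$ with $|B\cap(y_\alpha+F_\alpha)|>(d(B)-\varepsilon)|F_\alpha|$. F{\o}lner invariance forces
\[
|(x_i+B)\cap(y_\alpha+x_1+F_\alpha)|=|B\cap(y_\alpha+x_1-x_i+F_\alpha)|>(d(B)-2\varepsilon)|F_\alpha|
\]
simultaneously for all $i$, and disjointness of the translates $x_i+B$ then lets one sum to obtain
\[
\Bigl|\bigl(\bigcup_i(x_i+B)\bigr)\cap(y_\alpha+x_1+F_\alpha)\Bigr|>n(d(B)-2\varepsilon)|F_\alpha|;
\]
the $\mathcal{U}$-limit of the corresponding ratio gives $d(\bigcup_i(x_i+B))\ge n(d(B)-2\varepsilon)$, so letting $\varepsilon\to0$ completes the argument. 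The crux is that simultaneous density approximation at all $n$ translates requires genuine F{\o}lner averaging, and would fail for an arbitrary translation-invariant finitely additive probability measure.
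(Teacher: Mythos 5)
Your route is genuinely different from the paper's. The paper stays inside $S$, invoking the Argabright--Wilde Strong F\o lner Condition for commutative semigroups to define a F\o lner density directly on $\pf(S)$ (or, alternatively, pulls a left-invariant probability measure on $\beta S$ from \cite{DLS}); property (4) is then quoted from \cite{HSb}, and (2) is obtained by showing that $\{p\in\beta S:(\forall A\in p)(d(A)>0)\}$ is a two-sided ideal meeting $\overline A$. You instead pass to the group of quotients $G=S-S$, use a F\o lner net in the abelian group $G$, and stabilise with an ultrafilter on the index set. This is clean and elementary: your verification of (5) via simultaneous $\epsilon$-invariance under the finitely many shifts $x_i-x_1$ is correct, and your pigeonhole argument for (2) avoids the $\beta S$ machinery entirely (though note that it uses (4), so (4) must be established first, and (4) in turn needs (5); there is no circularity, but the order of proof matters).

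The one genuine gap is in (4), specifically for $-a+B$. In the lemma this means the \emph{semigroup}-theoretic set $-a+B=\{s\in S:a+s\in B\}$, which equals the group-theoretic translate $\{-a+b:b\in B\}$ intersected with $S$. The substitution $x\mapsto x-a$ proves $d(a+B)=d(B)$, but for $-a+B$ it only yields, after unwinding, $d(-a+B)=d\bigl(B\cap(a+S)\bigr)$, and your suggested fix of restricting the supremum to witnesses $x$ with $x+F_\alpha\subseteq S$ does not close the gap: after the substitution the constraint becomes $x+a+F_\alpha\subseteq S$, which is strictly weaker, so one only gets the inequality $d(-a+B)\le d(B)$, not equality. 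What is missing is the observation that $d\bigl(S\setminus(a+S)\bigr)=0$. This does hold, and can be proved from (5): the sets $a+T,2a+T,\ldots,na+T$ with $T=S\setminus(a+S)$ are pairwise disjoint translates of $T$ lying in $S$, so $n\,d(T)\le 1$ for all $n$. Once this is in place, $d(B)\le d\bigl(B\cap(a+S)\bigr)+d\bigl(S\setminus(a+S)\bigr)=d(-a+B)$ by (3), completing (4). So the argument is repairable, but as written the claim that (4) is ``automatic'' is not justified, and the dependence of (4) on (5) should be made explicit.
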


\begin{proof}  There are at least two ways to see this. 
One way is to use 
the following very general result, whose proof can be found in
\cite{DLS}. 

\begin{lemma}\label{dense2} Let $S$ be a left amenable semigroup and let $A$ be a piecewise syndetic
subset of $S$. Then a probability measure $\nu$ can be defined on $\beta S$ with the following properties:
\begin{itemize}
\item[(a)] $\nu(s^{-1}B)=\nu(B)$ for every Borel subset $B$ of\hfill\break
 $\beta S$ and every $s\in S$ and
\item[(b)] $\nu(\overline A)>0$.\end{itemize}
If, in addition $S$ is cancellative, then:
\begin{itemize}
\item[(c)] $\nu(sB)=\nu(B)$ for every Borel subset $B$ of $\beta S$\hfill\break
 and every $s\in S$.
\end{itemize}\end{lemma}

Using this lemma, one then defines $d(B)=\sup\{\nu(\overline B):\nu$ is a probability measure
on $\beta S$ satisfying (a) and (c) of Lemma \ref{dense2}$\}$.

An alternative, mostly elementary, method is to use the
fact proved by Argabright and Wilde
in \cite[Theorem 4]{AW} that any commutative semigroup satisfies the
{\it Strong F\o lner Condition\/}:
\[
\big(\forall H\in\pf(S)\big)\big(\forall \epsilon>0\big)
\big(\exists K\in\pf(S)\big)\big(\forall s\in H\big)\big(|K\symdif (s+K)|<\epsilon\cdot|K|\big)
\]
(For an entirely elementary proof of this fact see \cite[Section 7]{HSc}.)

If $S$ satisfies the Strong F\o lner Condition and $A\subseteq S$, then the 
{\it F\o lner density\/} of $A$ is 
 \begin{align*}
  d(A)=\sup\big\{\alpha: &
   \big(\forall H\in\pf(S)\big)\big(\forall \epsilon>0\big)
     \big(\exists K\in\pf(S)\big)\big(|A\cap K|\geq\alpha\cdot|K|\big)\text{ and }\\
 & \big(\forall s\in H\big)\big(|K\symdif (s+K)|<\epsilon\cdot|K|\big)\big)\big\}.
 \end{align*}

Conclusions (1) and (3) are routine to verify for F\o lner density and
conclusion (4) is established in \cite[Theorem 4.17]{HSb}.  Define
\[
 {\bf D}=\{p\in\beta S:(\forall A\in p)(d(A)>0)\}.
\]
To verify (2), let $A$ be a piecewise syndetic subset of $S$.
By \cite[Theorems 2.12, 2.14, and 5.9]{HSd}, ${\bf D}$ is a two-sided ideal of $\beta S$.
(Alternatively, it is a routine elementary argument to
establish directly that ${\bf D}$ is a two-sided ideal of $\beta S$.)
Since ${\bf D}$ is a two-sided ideal of $\beta S$, $K(\beta S)\subseteq{\bf D}$.
Since $A$ is piecewise syndetic, $\overline A\cap K(\beta S)\neq\emp$ and 
therefore $\overline A\cap {\bf D}\neq\emp$.  Therefore $d(A)>0$.

Finally, it is routine to establish that conclusion (5) holds.  To emphasize how
routine it is, we present the full proof now.
By conclusions (3) and (4), $d\big(\bigcup_{i=1}^n(x_i+B)\big)\leq\sum_{i=1}^nd(x_i+B)=nd(B)$.
Let $a=d(B)$. We complete the proof by showing that if $\alpha<a$,
then $d\big(\bigcup_{i=1}^n(x_i+B)\big)\geq n\alpha$.  

Let $\alpha<a$ be given and let $\gamma=(a-\alpha)/2$.  To see that
$d\big(\bigcup_{i=1}^n(x_i+B)\big)\geq n\alpha$, let $H\in\pf(S)$ and $\epsilon>0$ be
given.  Let $H'=H\cup\{x_1,x_2,\ldots,x_n\}$ and let $\epsilon'=\min\{\epsilon,\gamma\}$.
Since $d(B)>\alpha+\gamma$, pick $K\in\pf(S)$ such that $|B\cap K|\geq(\alpha+\gamma)|K|$
and $(\forall s\in H')(|K\symdif (s+K)|<\epsilon'|K|)$.  Then 
$(\forall s\in H)(|K\symdif (s+K)|<\epsilon|K|)$ so it suffices to show that
$\big|\big(\bigcup_{i=1}^n(x_i+B)\big)\cap K\big|\geq n\alpha |K|$.  By the disjointness assumption,
$\big|\big(\bigcup_{i=1}^n(x_i+B)\big)\cap K\big|=\sum_{i=1}^n|(x_i+B)\cap K|$ so it suffices to
let $i\in\nhat{n}$ and show that $|(x_i+B)\cap K|\geq \alpha|K|$.

So let $i\in\nhat{n}$ be given.  Since $x_i\in H'$, we have that
$|(x_i+K)\setminus K|<\epsilon'|K|$.  Therefore
  \begin{align*}
   |(x_i+B)\cap K| & \geq |(x_i+B)\cap (x_i+K)|-|(x_i+K)\setminus K|\\
    & > |B \cap K| - \epsilon'|K|\\
    & \geq (\alpha + \gamma)|K| - \gamma|K|\\
    & = \alpha|K|. \qedhere
  \end{align*}
\end{proof}

For the rest of this section we will assume we have a density function $d$ as guaranteed by
Lemma~\ref{dense}.

The next few lemmas are based on very similar results from \cite{BHL}.
Recall that, given a commutative semigroup $(S,+)$, $A\subseteq S$ and $k\in\ben$,
$kA=A+A+\cdots+A$ ($k$ times) and  $k\cdot A=
\{k\cdot s:s\in A\}$.

\begin{lemma}\label{syndetic}  Let $(G,+)$ be a commutative group and let
$A$ be a piecewise syndetic subset of $G$.  Then
$A-A$ is syndetic in $G$.\end{lemma}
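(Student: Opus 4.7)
The plan is to run a pigeonhole/density argument using the function $d$ from Lemma~\ref{dense}. Since $A$ is piecewise syndetic, property~(2) of $d$ gives $\alpha := d(A) > 0$. I will argue by contradiction: assuming $A - A$ is not syndetic, I will manufacture arbitrarily many pairwise disjoint translates of $A$, and property~(5) will then force $n\alpha \leq d(G) = 1$ for every $n$, which is absurd.

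The first step is to note that since $G$ is a group, $A - A$ is symmetric: $-(a - a') = a' - a \in A - A$. Next, I unpack the definition of syndeticity. To say $A - A$ is \emph{not} syndetic means that for every $F \in \pf(G)$ there is some $g \in G$ with $g \notin \bigcup_{t \in F}\bigl(-t + (A - A)\bigr)$, that is, $t + g \notin A - A$ for all $t \in F$.

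Using this, I inductively construct a sequence $g_1, g_2, \ldots$ in $G$ as follows. Set $g_1 = 0$. Given $g_1, \ldots, g_n$, apply the previous observation with $F = \{-g_1, \ldots, -g_n\}$ to pick $g_{n+1}$ with $g_{n+1} - g_i \notin A - A$ for every $i \leq n$. By the symmetry of $A - A$, we then have $g_i - g_j \notin A - A$ whenever $i \neq j$. Since $(g_i + A) \cap (g_j + A) \neq \emp$ would give some $a, a' \in A$ with $g_i - g_j = a' - a \in A - A$, the translates $g_1 + A, g_2 + A, \ldots$ are pairwise disjoint.

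Finally, property~(5) of the density $d$ from Lemma~\ref{dense} yields
\[
    d\Bigl(\bigcup_{i=1}^n (g_i + A)\Bigr) = n \cdot d(A) = n\alpha
\]
for every $n \in \ben$. Combined with $d(G) = 1$ from property~(1) and the trivial monotonicity (which follows from property~(3) applied with $C = G \setminus B$), this gives $n\alpha \leq 1$ for all $n$, contradicting $\alpha > 0$. The only real obstacle is bookkeeping the inductive step so that the chosen $g_{n+1}$ evades all previously used differences; everything else is a mechanical application of the properties of $d$.
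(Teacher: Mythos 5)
Your proof is correct, but it takes a genuinely different route from the paper's. The paper's argument is direct and elementary: starting from the definition of piecewise syndeticity, it fixes $H\in\pf(G)$ witnessing that property, and then shows that $L=H-H$ witnesses syndeticity of $A-A$ by a short computation with the two translates appearing in $\{0,x\}+y$. That proof uses nothing beyond the definitions. Your proof instead routes through the density function $d$ of Lemma~\ref{dense}: assuming $A-A$ is not syndetic, you inductively build arbitrarily many pairwise disjoint translates $g_i+A$, which property~(5) turns into $n\,d(A)\leq 1$ for all $n$, contradicting $d(A)>0$ from property~(2). This is a clean pigeonhole-by-density argument, but it is less self-contained, since it leans on the nontrivial existence of $d$, whereas the paper proves syndeticity of $A-A$ without invoking that machinery. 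One small quibble: your parenthetical that monotonicity "follows from property~(3) applied with $C=G\setminus B$" is not right (subadditivity gives $d(G)\leq d(B)+d(G\setminus B)$, which does not bound $d(B)$ from above), but the point is moot because $d$ already maps into $[0,1]$ by the statement of Lemma~\ref{dense}, so the bound $d\big(\bigcup_{i=1}^n(g_i+A)\big)\leq 1$ is immediate.
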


\begin{proof} Pick $H\in\pf(G)$ such that 
\[
\textstyle\big(\forall F\in\pf(G)\big)
(\exists y\in G)\big(F+y\subseteq\bigcup_{t\in H}(-t+A)\big).
\]
Let $L=H-H$.  We claim that $G\subseteq\bigcup_{s\in L}\big(-s+(A-A)\big)$.
To see this, let $x\in G$. Pick $y\in G$ such that $\{0,x\}+y\subseteq\bigcup_{t\in H}(-t+A)$.
Pick $t_1$ and $t_2$ in $H$ such that
$t_1+x+y\in A$ and $t_2+0+y\in A$.  Then $x+(t_1-t_2)\in A-A$.\end{proof}

\begin{lemma}\label{subgroup}  Let $(G,+)$ be a commutative group and let
$S\subseteq G$ such that $0\in S$, $S=-S$, and $d(S)>0$.
Then there is a subgroup $E$ of $G$ such that if $l\geq 2/d(S)$, then
$lS=E$.\end{lemma}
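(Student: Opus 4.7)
Set $E = \bigcup_{l \geq 1} lS$. Since $0 \in S$ the sequence $(lS)_{l \geq 1}$ is ascending, since $S = -S$ each $lS$ is symmetric, and since $lS + l'S \subseteq (l+l')S$ the union $E$ is closed under addition; together with $0 \in E$ this identifies $E$ as the subgroup of $G$ generated by $S$. The content of the lemma is that this ascending chain is already stationary at $E$ once $l \geq 2/d(S)$.

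The plan is to bound, uniformly for every $y \in E$, the length of the shortest representation of $y$ as a sum of elements of $S$. Fix $y \in E$, write $y = s_1 + s_2 + \cdots + s_m$ with each $s_i \in S$ and $m$ as small as possible, and consider the partial sums $p_i = s_1 + \cdots + s_i$ ($0 \leq i \leq m$) together with their translates $p_i + S$. If some pair $p_i + S$, $p_j + S$ (say $i < j$) meet, then $p_j - p_i = u - v$ for some $u, v \in S$; using $S = -S$ gives $p_j - p_i = u + (-v) \in 2S$, and substitution yields the representation $y = s_1 + \cdots + s_i + u + (-v) + s_{j+1} + \cdots + s_m$ of length $m + 2 - (j - i)$. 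Minimality of $m$ therefore forces $j - i \leq 2$, so the translates $p_i + S$ and $p_j + S$ are disjoint whenever $|i - j| \geq 3$. Extracting the indices $\{0, 3, 6, \ldots\} \cap \{0, \ldots, m\}$ then produces $\lfloor m/3 \rfloor + 1$ pairwise disjoint translates of $S$ in $G$, and property (5) of $d$ gives $(\lfloor m/3 \rfloor + 1)\,d(S) \leq 1$, hence $m < 3/d(S)$.

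To tighten this to the sharp $m \leq 2/d(S)$, I would augment the collection $\{p_i + S\}$ with a second family $\{(y + p_j) + S\}$. A parallel minimality analysis applies: an intersection $(p_i + S) \cap ((y + p_j) + S)$ forces $y + p_j - p_i \in 2S$, which yields a representation of $y$ of length $|i - j| + 2$, so the two families are cross-disjoint whenever $|i - j| \leq m - 3$. Taking $I_1 = I_2 = \{0, 3, \ldots, 3k\}$ with $3k \leq m - 3$ guarantees pairwise disjointness throughout, so property (5) applied to all $2(k+1)$ translates at once delivers $2(k + 1)\,d(S) \leq 1$ and so $m \leq 3/(2\,d(S)) + O(1)$. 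The residual $O(1)$ slack is absorbed in the small-$d(S)$ regime and is neutralised in the large-$d(S)$ regime by the direct observation that, when $d(S) > 1/2$, property (5) applied to the two translates $S$ and $y + S$ rules out their disjointness (their union would have density $2\,d(S) > 1$), so $y \in S + (-S) = 2S$ for every $y$ and hence $E = G = 2S$, making the lemma immediate. The main obstacle I anticipate is handling the intermediate range of $d(S)$ cleanly, where a careful choice of the two index sets is needed to convert the $3/(2\,d(S))$ bound into the sharp $2/d(S)$.
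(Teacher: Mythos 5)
Your approach is genuinely different from the paper's, and the core ``minimal path'' idea is sound, but as you yourself note it stops short of the stated bound, and the shortfall is not a detail that can be finessed away --- it is the crux of the lemma.

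The paper does not fix a single $y$ and analyse one minimal representation. Instead it argues by contradiction that the chain $S \subseteq 2S \subseteq 3S \subseteq \cdots$ must stabilise by step $2\lfloor 1/d(S)\rfloor$: if $(2j)S \subsetneq (2j+1)S$ for every $j \leq 1/d(S)$, then one can build, by induction on $j$, a family of $j+1$ pairwise disjoint translates of $S$ inside $(2j+1)S$. The inductive step picks a \emph{fresh} witness $x \in (2j+1)S\setminus (2j)S$, writes $x = s_1+\cdots+s_{2j+1}$, contributes the single new translate $S + x - s_1$, and recurses inside $(2j-1)S - s_1$. Crucially each new translate costs only two extra summands, so the construction yields roughly one disjoint translate per two steps of the chain; property~(5) then forces $j \leq 1/d(S)$, and $(k+1)S=kS$ follows for all $k \geq 2/d(S)$.

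Your construction, by contrast, extracts disjoint translates from the partial sums $p_0,\ldots,p_m$ of a single minimal representation of a fixed $y$. Minimality only rules out intersections of $p_i+S$ and $p_j+S$ when $|i-j|\geq 3$, so you get one disjoint translate per \emph{three} steps; the bound you derive is $m < 3/d(S)$, and the doubling trick with the second family $\{(y+p_j)+S\}$ improves this only to $m < 3/(2\,d(S)) + O(1)$. Both constraints --- gaps $\geq 3$ within each family, gaps $\leq m-3$ across families --- force $I_1\cup I_2$ into an interval of length about $m-2$ with spacing $3$, so no choice of index sets can break the $3/2$ factor; the bound you need is the integer $\lceil 2/d(S)\rceil$, and for $d(S)$ in a range roughly $(1/6, 1/2]$ your estimates do not reach it. The large-density fallback ($d(S)>1/2 \Rightarrow 2S=G$) is correct, but it leaves exactly the window you identify. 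In short: your plan proves that the chain stabilises and that $E = \bigcup_l lS$ is the right subgroup, but it proves a quantitatively weaker statement ($lS=E$ for $l \geq 3/d(S)$, say) rather than the lemma as stated. To recover the factor $2$ you would need to emulate the paper's idea of re-choosing the witness at every level of the induction rather than reusing the prefix sums of one element; the one-element path simply does not supply enough pairwise-disjoint translates.
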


\begin{proof}  We will show that there is some $j \leq 1/d(S)$ such that
$(2j+1)S=(2j)S$, and so $(k+1)S=kS$ for $k\geq 2 / d(S)$.  
Once we have shown this, let $k = \lceil 2/d(S) \rceil$ and let $E = kS$.
We have that $E + E = kS + kS = (2k)S = E$.  Since $S$ is symmetric, we also have that $E=-E$, 
so $E$ is closed under addition and the taking of inverses, 
hence is a subgroup of $G$ as required.

Suppose instead that for each $j \leq 1/d(S)$,
$(2j)S\subsetneq (2j+1)S$. We claim that, for each such $j$,
$(2j+1)S$ contains $j+1$ disjoint translates of $S$.  
This is a contradiction for $j=\lfloor1/d(S)\rfloor$.

The claim is true for $j=0$.  For $j>0$, choose $x \in (2j+1)S \setminus (2j)S$.
Then $x=s_1 + \cdots + s_{2j+1}$ with $s_i\in S$ for each $i$.  We have
 \begin{align*}
  S + x - s_1 = S + s_2 + \cdots + s_{2j+1} & \subseteq (2j+1)S \\
 \text{and }(2j-1)S - s_1 \subseteq (2j)S & \subseteq (2j+1)S.
 \end{align*}
Since $(2j-1)S-s_1$ contains $j$ disjoint translates of $S$, 
it suffices to show that $S + x - s_1$ and $(2j-1)S - s_1$ are disjoint.  
But if they intersect then $t_0 + x - s_1 = t_1 + \cdots + t_{2j-1} - s_1$ for some $t_i \in S$, 
from which it follows that $x = t_1 + \cdots + t_{2j-1} - t_0 \in (2j)S$, contradicting the choice of $x$.
\end{proof}

\begin{lemma}\label{sminusks} Let $(G,+)$ be a commutative group and let
$S\subseteq G$ such that $0\in S$ and $d(S)>0$.
Then there exists $Y\subseteq G$ such that for $k\geq 2/d(S)$, we
have $S-kS=Y$.\end{lemma}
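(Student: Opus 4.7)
The plan is to follow the template of the proof of Lemma~\ref{subgroup}. Write $A_k = S - kS$. Since $0 \in S$, we have $A_k = A_k - 0 \subseteq A_k - S = A_{k+1}$, so $(A_k)_{k \geq 0}$ is increasing; and $A_k = A_{k+1}$ for some $k$ forces $A_{k'} = A_k$ for every $k' \geq k$. Hence it suffices to show that the chain cannot strictly increase for too long.

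To locate the ambient subgroup, apply Lemma~\ref{subgroup} to $T = S \cup (-S)$, which satisfies $0 \in T$, $T = -T$, and $d(T) \geq d(S) > 0$ (by monotonicity of $d$, which follows from the F\o{}lner-density characterisation). This yields a subgroup $E$ of $G$ with $lT = E$ whenever $l \geq 2/d(T)$, and in particular whenever $l \geq 2/d(S)$. Each element of $A_k$ is a sum of $k+1$ elements of $T$, so $A_k \subseteq (k+1)T \subseteq E$ as soon as $k+1 \geq 2/d(S)$.

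To bound the length of the strict chain I would mimic the ``disjoint translates'' argument of Lemma~\ref{subgroup}. Suppose $A_{k-1} \subsetneq A_k$ for each $k = 1, \ldots, m$, and pick witnesses $x_k \in A_k \setminus A_{k-1}$ with decompositions $x_k = t_k - s_1^{(k)} - \cdots - s_k^{(k)}$ (with $t_k, s_i^{(k)} \in S$). Then
\[ (x_k + s_1^{(k)}) - S \;=\; t_k - s_2^{(k)} - \cdots - s_k^{(k)} - S \;\subseteq\; A_k \]
is a translate of $-S$ lying inside $A_m$ with centre $x_k + s_1^{(k)} \in A_{k-1}$. The aim is to arrange (by a judicious choice of the $x_k$) that these $m$ translates are pairwise disjoint; property~(5) of Lemma~\ref{dense} would then force $m \cdot d(S) \leq d(A_m) \leq 1$, so $m \leq 1/d(S)$, and the chain must stabilise at some index $k_0 \leq \lceil 1/d(S) \rceil \leq \lceil 2/d(S) \rceil$. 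We may then take $Y = A_{k_0}$.

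The main obstacle is the disjointness check. In Lemma~\ref{subgroup} an overlap was refuted using $-t_0 \in S$ (via the symmetry $S = -S$) to express $x$ as a sum of $2j$ elements of $S$, contradicting $x \notin (2j)S$. Without symmetry the analogous manipulation leaves an extra ``positive'' $S$-term and does not collapse back into the one-positive-many-negative form that membership in $A_{k-1}$ requires. Overcoming this will probably demand choosing the $x_k$ inductively to avoid the $(S-S)$-neighbourhoods of the earlier witnesses, and strengthening the induction hypothesis to track the centres of the translates so that an overlap forces a genuine contradiction with $x_k \notin A_{k-1}$.
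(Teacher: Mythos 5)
Your skeleton (the increasing chain $A_k=S-kS$, stabilisation once two consecutive terms agree, and a density bound via disjoint translates) is correct, and you have accurately located the obstruction — but you haven't closed it, and the detour through Lemma~\ref{subgroup} applied to $S\cup(-S)$ buys nothing. The specific misstep is the choice of translate. You cancel one of the \emph{negative} terms $s_1^{(k)}$ and obtain $(x_k+s_1^{(k)})-S$, a translate of $-S$; the overlap computation then produces an expression with two positive $S$-terms, which does not fit the shape $S - lS$ and gives no contradiction. The paper instead cancels the unique \emph{positive} term: write $x=s_0-s_1-\cdots-s_{2j+1}$ with $s_i\in S$ and consider $S+x-s_0=S-s_1-\cdots-s_{2j+1}$, a translate of $+S$ inside $S-(2j+1)S$. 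Now the asymmetry of $S$ is harmless: if $S+x-s_0$ meets $S-(2j-1)S-s_0$, then $t_0+x-s_0=t_1-t_2-\cdots-t_{2j}-s_0$ for some $t_i\in S$, so $x=t_1-t_2-\cdots-t_{2j}-t_0\in S-(2j)S$ — the extra $-t_0$ simply joins the negative block, no $-t_0\in S$ required — contradicting $x\notin S-(2j)S$.

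Structurally you should also not fix all witnesses $x_1,\ldots,x_m$ in advance and try to arrange pairwise disjointness by hand; that is what leads you to contemplate ad hoc avoidance conditions. Run the induction exactly as in Lemma~\ref{subgroup}: show that if $S-(2i)S\subsetneq S-(2i+1)S$ for every $i\le j$ then $S-(2j+1)S$ contains $j+1$ pairwise disjoint translates of $S$. At step $j$ the $j$ translates living inside $S-(2j-1)S$ are all shifted by $-s_0$ into $S-(2j)S$, and $S+x-s_0$ is appended; the displayed computation shows the new translate misses the block $S-(2j-1)S-s_0$ containing all the old ones. The jump of two in the index is essential: if you step from $S-jS$ to $S-(j+1)S$, the overlap computation only yields $x\in S-(j+1)S$, which you already knew. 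Property~(5) of Lemma~\ref{dense} together with $d(\cdot)\le 1$ then forces $(j+1)d(S)\le 1$, impossible at $j=\lfloor 1/d(S)\rfloor$, so the chain stabilises at some index at most $2\lfloor 1/d(S)\rfloor\le 2/d(S)$ and one takes $Y$ to be the stable value. The ambient subgroup $E$ never enters.
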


\begin{proof} We suppose to the contrary that $S-(2j)S \subsetneq S-(2j+1)S$
for all $j \leq 1/d(S)$ and show that $S-(2j+1)S$ contains
$j+1$ disjoint translates of $S$ for each such  $j$, which is impossible for $j=\lfloor 1/d(S)\rfloor$.

The claim is true for $j=0$.  For $j>0$, choose $x \in \big(S-(2j+1)S\big) \setminus \big(S-(2j)S\big)$.  
Then $x=s_0 - s_1 - \cdots - s_{2j+1}$ with $s_i \in S$ for each $i$.  We have
 \begin{align*}
 S + x - s_0 & \subseteq S - (2j+1)S, \text{ and}\\
 S - (2j-1)S - s_0 & \subseteq  S - (2j)S \subseteq S - (2j+1)S\,,
 \end{align*}
hence it suffices to show that $S + x - s_0$ and $S - (2j-1)S - s_0$ are disjoint.
But if they intersect then $t_0 + x - s_0 = t_1 - t_2 - \cdots - t_{2j} - s_0$ for
some $t_i\in S$, whence $x = t_1 - t_2 - \cdots - t_{2j} - t_0 \in S - (2j)S$, 
contradicting the choice of $x$.\end{proof}

\begin{lemma}\label{aminusa}   Let $(G,+)$ be a commutative group and let
$A\subseteq G$ such that $d(A)>0$.  Assume that $E$ is a subgroup of $G$ such
that if $l\geq 2/d(A-A)$, then $l(A-A)=E$.  If $k\geq 2/d(A)$, then
$(A-kA)=(A-kA)+E$.\end{lemma}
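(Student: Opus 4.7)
My plan is to prove the inclusion $(A-kA)+E\subseteq A-kA$ (the reverse being immediate from $0\in E$).

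I first reduce the problem. Since $A-A\subseteq l(A-A)=E$, the set $A$ lies in a single coset $a_0+E$ of $E$ for any fixed $a_0\in A$, and consequently $A-kA$ lies in $(1-k)a_0+E$, a single coset. The claim $(A-kA)+E=A-kA$ is therefore equivalent to $A-kA$ equalling the whole coset $(1-k)a_0+E$, which in turn, after translation by $(k-1)a_0$, is equivalent to $S-kS=E$, where $S:=A-a_0$ satisfies $0\in S$, $d(S)=d(A)$, and $l(S-S)=E$ for $l\geq 2/d(S-S)$. Applying Lemma~\ref{sminusks} to $S$, for $k\geq 2/d(S)$ the set $Y:=S-kS$ has stabilised and satisfies $Y-S=Y$, $S\cup(-S)\cup(S-S)\subseteq Y$, and $Y\subseteq E$. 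A direct computation gives
\[
 Y-Y=(S-kS)-(S-kS)=(S-S)+k(S-S)=(k+1)(S-S)=E,
\]
using $k+1\geq 2/d(S-S)$, which follows from $d(S)\leq d(S-S)$ (the latter because $S-S$ contains the translate $S-\{s_0\}$ for any $s_0\in S$).

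The main step, and the delicate part, is to upgrade $Y-Y=E$ to $Y=E$. My plan is a disjoint-translates argument in the style of Lemmas~\ref{subgroup} and~\ref{sminusks}. Assume for contradiction that $Y\subsetneq E$, pick $e\in E\setminus Y$, and write $e=\sum_{i=1}^l(b_i-c_i)$ with $b_i,c_i\in S$. Starting from $0\in Y$ and walking along the partial sums of this expression, using the closure $Y-S\subseteq Y$ at each step to re-enter $Y$ after subtracting a $c_i$, one locates a pair $z\in Y$ and $b\in S$ with $z+b\notin Y$. Fixing any representation $z=s-s_1-\cdots-s_k$, the condition $z+b\notin S-kS$ forces the combinatorial obstruction $s+b-s_i\notin S$ for every $i\in\{1,\ldots,k\}$; otherwise the swap $z+b=(s+b-s_i)-\sum_{j\neq i}s_j$ would place $z+b$ in $S-(k-1)S\subseteq Y$. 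The remaining task is to parlay this obstruction, together with the freedom to re-choose the representation of $z$, into more than $1/d(S)$ pairwise disjoint translates of $S$, contradicting property~(5) of Lemma~\ref{dense}. Arranging these disjoint translates, in analogy with the inductive construction of $j+1$ disjoint copies of $S$ inside $S-(2j+1)S$ from the proof of Lemma~\ref{sminusks}, is the main obstacle I expect to encounter.
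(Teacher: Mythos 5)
Your reduction to showing $Y:=S-kS=E$ (with $S=A-a_0$) is sound, and the facts $Y-S\subseteq Y$, $Y\subseteq E$, and $Y-Y=(k+1)(S-S)=E$ are all correctly derived.  But the passage from there to $Y=E$, which you flag as ``the main obstacle I expect to encounter'', is a genuine gap: the disjoint-translates plan you sketch does not obviously close, because after finding $z\in Y$ and $b\in S$ with $z+b\notin Y$ you only learn that a single translate $s+b-S$ misses $\{s_1,\ldots,s_k\}$, and it is not clear how to parlay that into $\lfloor 1/d(S)\rfloor+1$ pairwise disjoint translates of $S$.  The proposal is therefore incomplete.

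The missing ingredient is the companion inclusion $Y+S\subseteq Y$; together with $Y-S\subseteq Y$ and $0\in Y$, your walking argument then finishes immediately, since any $e\in E=(k+1)(S-S)$ is reached from $0$ by alternately adding and subtracting elements of $S$ with every partial sum remaining in $Y$.  To get $Y+S\subseteq Y$, apply Lemma~\ref{sminusks} not only to $S$ but to each translate $S-s$, $s\in S$.  Each such set contains $0$ and has density $d(S)$, so for $k\geq 2/d(S)$ one has $(S-s)-k(S-s)=(S-s)-(k+1)(S-s)$; expanding, the left side is $Y+(k-1)s$ and, using $Y-S=Y$, the right side is $Y+ks$, whence $Y+s=Y$.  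This is precisely the engine of the paper's proof, phrased there as: for every $a\in A$, Lemma~\ref{sminusks} applied to $A-a$ gives $X=X-(A-a)$, where $X=A-kA$; hence $X=X+(A-A)=X+l(A-A)$ for all $l$, and so $X=X+E$.  Note that this direct route makes the coset reduction, the computation $Y-Y=E$, and the disjoint-translates machinery all unnecessary.
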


\begin{proof} Let $k=\lceil 2/d(A)\rceil$ and let $X=A-kA$.   For any $a\in A$,
 $0\in A-a$ and $d(A-a)=d(A)$, so by Lemma \ref{sminusks}, if
$Y=(A-a)-k(A-a)$ then also $Y=(A-a)-(k+1)(A-a)$ and so
$X=X-(A-a)$.  Letting $a$ range over the whole of $A$ gives $X = X + (A-A)$. Then for all $l\in\ben$,
$X=X+l(A-A)$, hence $X=X+E$ as required.\end{proof}

The following is our promised generalisation of Lemma \ref{AminuskA}.

\begin{lemma}\label{aminuska}  Let $(G,+)$ be a commutative group and assume that
$n\cdot G$ is a central* set for each $n\in\ben$.  Let $A$ be a 
central subset of $G$.  Then $d(A)>0$ and there exists
$m\in\ben$ such that if $k\geq 2/d(A)$, then $m\cdot G\subseteq A-kA$.
\end{lemma}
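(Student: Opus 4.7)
The plan is to chain together Lemmas \ref{syndetic}, \ref{subgroup}, and \ref{aminusa} to locate a suitable finite-index subgroup $E\leq G$ that is forced inside $A-kA$, and then use the central* hypothesis to convert the index of $E$ into the desired $m$.

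First, since $A$ is central it is piecewise syndetic, so Lemma \ref{dense}(2) gives $d(A)>0$. Lemma \ref{syndetic} then makes $A-A$ syndetic in $G$, and in particular $d(A-A)>0$. Applying Lemma \ref{subgroup} to $S=A-A$ (which contains $0$, is symmetric, and has positive density) yields a subgroup $E\leq G$ with $l(A-A)=E$ for all $l\geq 2/d(A-A)$. Because $0\in A-A$, successive sums give $A-A\subseteq 2(A-A)\subseteq\cdots\subseteq E$, so $E$ is itself syndetic in $G$. For a subgroup of a commutative group, being syndetic is equivalent to having finite index; set $m=[G:E]$. Since $G/E$ has order $m$, every $g\in G$ satisfies $m\cdot g\in E$, hence $m\cdot G\subseteq E$.

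Next, Lemma \ref{aminusa} applies with this $E$ and shows that for $k\geq 2/d(A)$, $A-kA=(A-kA)+E$, so $A-kA$ is a union of cosets of $E$. It therefore suffices to show that the coset $E$ itself lies inside $A-kA$: then $m\cdot G\subseteq E\subseteq A-kA$ and we are done. This is where the central* hypothesis enters. By assumption $m\cdot G$ is central*, so its superset $E$ is central* as well, and consequently $A\cap E$ is central, in particular non-empty. Choose any $a_0\in A\cap E$. Taking all $k$ summands in $kA$ equal to $a_0$, we get $a_0-k\cdot a_0\in A-kA$, and this element lies in $E$ because $a_0\in E$ and $E$ is a subgroup. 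Thus $A-kA$ meets $E$, and by $E$-invariance it contains all of $E$, completing the proof.

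I do not expect a genuine obstacle; the proof is essentially bookkeeping on top of the preceding lemmas. The only subtle point is the role of the central* hypothesis: without it one can identify the finite-index subgroup $E$, but nothing would force $A$ to meet the distinguished coset $E$ itself, and so nothing would force $A-kA$ to contain $E$. The hypothesis is applied exactly once, to the single integer $m=[G:E]$, and immediately delivers $A\cap E\neq\emptyset$ via $m\cdot G\subseteq E$.
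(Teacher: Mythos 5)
Your proof is correct and follows the same overall architecture as the paper: establish $d(A)>0$, build the subgroup $E=l(A-A)$ via Lemma~\ref{subgroup}, show $E$ is syndetic and find $m$ with $m\cdot G\subseteq E$, apply Lemma~\ref{aminusa} to get $A-kA=(A-kA)+E$, and finally use the central* hypothesis to force $A$ to meet $E$, so that $E$ (hence $m\cdot G$) sits inside $A-kA$. The one place you genuinely diverge is in producing $m$: you observe that a syndetic subgroup of a commutative group has finite index, set $m=[G:E]$, and invoke Lagrange's theorem to conclude $m\cdot G\subseteq E$, whereas the paper instead covers $G$ by translates $-x_i+E$ and runs a pigeonhole argument on the multiples $a\cdot x_i$ to manufacture $m$ as a product $\prod_i(b_i-a_i)$. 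Both are valid; your route is shorter and more conceptual, while the paper's is self-contained at the level of the covering data it already has in hand. Your extra remark that $A\cap E$ is central (not merely non-empty) is accurate but unnecessary for the argument --- non-emptiness is all that is used.
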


\begin{proof} Since $A$ is piecewise syndetic, by Lemma \ref{dense} $d(A)>0$.
Let $S=A-A$.  Pick by Lemma \ref{subgroup} a subgroup $E$ of $G$ such that if $l\geq 2/d(S)$, then
$lS=E$.  By Lemma \ref{aminusa} we have that if $k\geq 2/d(A)$, then
$(A-kA)=(A-kA)+E$.

Now $A-A$ is syndetic by Lemma \ref{syndetic}.
Let $l=\lceil 2/d(A-A)\rceil$.  Then $A-A\subseteq l(A-A)=E$, so
$E$ is syndetic.  Pick $n\in\ben$ and $x_1,x_2,\ldots,x_n\in G$ such that
$G=\bigcup_{i=1}^n(-x_i+E)$.  For $i\in\nhat{n}$ pick $a_i<b_i$ in $\ben$ and $j\in\nhat{n}$
such that $a_i\cdot x_i\in (-x_j+E)$ and $b_i\cdot x_i\in (-x_j+E)$.
Then $(b_i-a_i)\cdot x_i\in E-E=E$.  Let $m=\prod_{i=1}^n(b_i-a_i)$.  Then
for each $i\in\nhat{n}$, $m\cdot x_i\in E$.  

We claim that $m\cdot G\subseteq E$.  So let $x\in G$ and pick $i\in\nhat{n}$ such
that $x\in (-x_i+E)$. Then $x_i+x\in E$ so $m\cdot x_i+m\cdot x\in E$.
Since $m\cdot x_i\in E$, we have that $m\cdot x\in E$.

Finally, let $k\geq 2/d(A)$.  Then
$(A-kA)=(A-kA)+E$.  Since $m\cdot G$ is central*, $A\cap m\cdot G\neq\emp$.
If $x\in A\cap m\cdot G$, then $x-k\cdot x\in (A-kA)\cap E$ so
$E=(x-k\cdot x)+E\subseteq A-kA$. Thus $m\cdot G\subseteq A-kA$.\end{proof}

\noindent {\it Proof of Lemma\/} \ref{AminuskA}. Let $C$ be a central subset
of $\ben$. By \cite[Exercise 4.3.8]{HS} $K(\beta\bez)=K(\beta\ben)\cup -K(\beta\ben)$.
Therefore $C$ is a central subset of $\bez$.
Given $n\in\ben$, by \cite[Lemma 6.6]{HS}, $n\cdot\ben$ is a member of every idempotent in $\beta\ben$, so 
in particular is a member of every idempotent in $K(\beta\ben)$.  Again using 
the fact that $K(\beta\bez)=K(\beta\ben)\cup -K(\beta\ben)$, we then have that
$n\cdot\bez$ is central* in $\bez$.  Therefore Lemma \ref{aminuska} applies.\qed

We are now ready for our strengthening of Theorem \ref{iskprn}.

\begin{corollary}\label{strongcentral} Let $A$ be the matrix of
Theorem {\rm \ref{iskprn}}. For any central subset $C$ of $\ben$,
there exist $\vec x$ and $\vec y$ in $C^\omega$ such that
$\begin{pmatrix}A&-I\end{pmatrix}\begin{pmatrix} \vec x\\ 
\vec y\end{pmatrix}=\vec 0$ and all entries of $\begin{pmatrix}\vec x \\ \vec y \end{pmatrix}$ are distinct.
\end{corollary}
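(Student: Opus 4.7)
The plan is to follow the inductive construction of Theorem~\ref{iskprn}, but at each stage to choose the new variables carefully so that all entries of $\vec x$ and $\vec y$ ever produced are pairwise distinct. Throughout, I maintain a finite set $F\subseteq\ben$ of already-chosen values and always draw new variables from $C\setminus F$. This set is still central, since the minimal idempotent $p\in\beta\ben$ containing $C$ is necessarily non-principal, so $C\setminus F\in p$. As long as the F\o{}lner density $d(C)$ and the subgroup $E=l(C-C)$ of Lemma~\ref{subgroup} are preserved when $C$ is replaced by $C\setminus F$, Lemma~\ref{AminuskA} applies to $C\setminus F$ with the same constants $m,K$. Inspection of the proof of Lemma~\ref{aminuska} moreover yields the sharper conclusion $E\subseteq(C\setminus F)-k(C\setminus F)$ for all $k\geq K$. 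Reducing the equations $2x_0+x_1=y_0$ and $2x_1+x_2+x_3=y_1$ modulo $n=[\bez:E]$ forces $2a\equiv 3a\equiv 0\pmod n$, where $a$ is the coset of $E$ containing $C$; hence $a\equiv 0\pmod n$ and $C\subseteq n\bez$. In particular, $2x_{r+1}\in E$ for every $x_{r+1}\in C$.

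For the base case ($r\leq M$) I would produce, via standard arguments (using that no two columns of $(P\mid -I_{M+1})$ coincide), an initial solution $x_0,\dots,x_{2^{M+1}-1},y_0,\dots,y_M\in C$ with pairwise distinct entries --- for example by iteratively re-solving in $C$ minus any finite obstruction set. For the inductive step at stage $r+1$, let $F$ denote the (finite) set of previously chosen values and apply the sharpened Lemma~\ref{AminuskA} to $C\setminus F$ with $k=2^{r+1}\geq K$: since $2x_{r+1}\in E$, there exist $c_0,c_1,\dots,c_k\in C\setminus F$ with $c_0-c_1-\cdots-c_k=2x_{r+1}$. Set $y_{r+1}:=c_0$ and $x_{2^{r+1}+i-1}:=c_i$ for $i=1,\dots,k$. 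Since $c_0=2x_{r+1}+c_1+\cdots+c_k>c_i$ for every $i\geq 1$, the value $y_{r+1}$ is automatically distinct from each new $x$-value and from $F$, and each new $x$-value is distinct from $F$.

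The one remaining danger is $c_i=c_j$ for some $1\leq i<j\leq k$, and this is where I expect the argument to require most work. I would eliminate such coincidences by perturbation: swap $(c_i,c_j)$ for $(c_i',c_j')$ with $c_i'+c_j'=c_i+c_j$, $c_i'\neq c_j'$, both in $C\setminus F$, and both distinct from the remaining $c_\ell$'s and from $c_0$. The existence of such a pair reduces to showing that $2c_i$ admits many representations as a sum of two elements of $C$, which should follow from positive F\o{}lner density of $C$ in $n\bez$ by a counting argument (and more robustly from the fact that $C$ contains an IP-set). Iterating the perturbation over all coincident pairs produces pairwise distinct new entries. The main technical obstacle is making this ``many sum-representations'' statement uniform in the growing finite exclusion set; a cleaner alternative is to strengthen Lemma~\ref{AminuskA} directly --- adapting the proof of Lemma~\ref{aminuska} and exploiting the $E$-translation invariance of $C-kC$ from Lemma~\ref{aminusa} --- to produce pairwise distinct witnesses from the outset.
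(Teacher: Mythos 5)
Your high-level plan---follow the inductive construction of Theorem~\ref{iskprn} while discarding a growing finite set $F$ of already-used values---is the right instinct, and some of the observations en route are sound: a central set minus a finite set is still central via the same (non-principal) idempotent; the proof of Lemma~\ref{aminuska} does yield $E\subseteq A-kA$; and your modular reduction showing $C\subseteq E$, which lets you target $2x_{r+1}\in E$ directly rather than padding with $m-2$ extra copies of $x_{r+1}$, is a genuinely nice simplification. But the real difficulty is exactly the one you flag as needing the most work: the lemma only guarantees that $2x_{r+1}\in (C\setminus F)-k(C\setminus F)$, i.e.\ that \emph{some} tuple $(c_0,c_1,\dots,c_k)$ exists, with no control over collisions among the $c_i$. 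Your proposed swap perturbation requires, for each colliding pair, that $C$ meet $2c_i-C$ in more than finitely many points (avoiding an ever-growing exclusion set), and nothing you cite establishes this. Positive F\o{}lner density alone gives no lower bound on such intersections: subadditivity of $d$ runs in the wrong direction, and there is no reason to expect $d(C)>1/2$. Your passing remark that it ``should follow more robustly from the fact that $C$ contains an IP-set'' is actually pointing at the right mechanism, but it is left entirely undeveloped, and the alternative of ``strengthening Lemma~\ref{AminuskA} to produce pairwise distinct witnesses'' is precisely the missing lemma with no proof offered. The base case is also not standard: iteratively re-solving in $C\setminus F$ produces a solution avoiding $F$, but does not obviously avoid collisions among the new entries themselves.

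The paper's proof uses the IP structure in a specific way that dissolves both issues. It works throughout inside $C^\star=\{x\in C:-x+C\in p\}$, the idempotent-stable core of $C$, whose key feature is that translating by any element of $C^\star$ still lands inside a $p$-large set. At each inductive stage it first produces \emph{any} solution $v,z_{2^{r+1}},\dots,z_{2^{r+2}-1}\in C^\star$ (repeats permitted, exactly as in Theorem~\ref{iskprn}), then forms $D=(-v+C^\star)\cap\bigcap_t(-z_t+C^\star)\in p$, extracts an increasing FS-sequence $\langle w_n\rangle$ inside $D$, and sets $x_j=z_j+w_{n(j)}$ with $n(j)$ strictly increasing and chosen large enough. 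The definition of $D$ and the FS-structure guarantee that each $x_j$ and $y_{r+1}=v+\sum_j w_{n(j)}$ land in $C^\star$, while the strictly increasing choice of the $w_{n(j)}$ makes all new entries automatically distinct from one another and from everything chosen at earlier stages. For the base case it invokes \cite[Theorem 15.24(m)]{HS} to get a solution in $C^\star$ with distinct entries, rather than an ad hoc exclusion argument. This FS-perturbation via $C^\star$ is the idea your proposal is missing; the perturbation is additive along an FS-set rather than a pairwise swap, which is what makes membership in the central set automatic.
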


\begin{proof} Let $C$ be a central set and pick an idempotent $p\in K(\beta\ben)$
such that $C\in p$.  Let $C^\star=\{x\in C:-x+C\in p\}$.  By \cite[Lemma 4.14]{HS},
if $x\in C^\star$, then $-x+C^\star\in p$.  By Lemma \ref{aminuska},
pick $m\in\ben$ such that for all $k\geq 2/d(C^\star)$, $m\cdot \bez\subseteq C^\star - kC^\star$.
Pick $M\in\ben$ such that $2^{M+1}\geq m-2+\lceil 2/d(C^\star)\rceil$.  

Let $P$ be the $(M+1)\times 2^{M+1}$ matrix consisting of rows 
$0,1,\ldots,M$ and columns $0,1,\ldots,2^{M+1}-1$ of $A$ and
let $I_{M+1}$ be the $(M+1)\times(M+1)$ identity matrix. If the 
columns of $\begin{pmatrix}I_{M+1}\\ P\end{pmatrix}$
are reversed, then the first nonzero entry in each column is $1$
so by \cite[Corollary 15.6]{HS}, $\begin{pmatrix}I_{M+1}\\ P\end{pmatrix}$
is image partition regular over $\ben$ and thus by 
\cite[Theorem 15.24(m)]{HS} we may choose $\vec x=\langle x_i\rangle_{i=0}^{2^{M+1}-1}$
and $\vec y=\langle y_i\rangle_{i=0}^M$ in $C^\star$ such that all entries of $\begin{pmatrix} \vec x \\ \vec y \end{pmatrix}$ are distinct.

Now let $r\geq M$ and assume that we have chosen $\langle x_i\rangle_{i=0}^{2^{r+1}-1}$
and $\langle y_i\rangle_{i=0}^r$ in $C^\star$ such that for each $n\in\ohat{r}$,
$y_n=2x_n+\sum_{j=2^n}^{2^{n+1}-1}x_n$ and all of the $x_i$ and $y_j$ are distinct.
For $2^{r+1} \leq t \leq 2^{r+2}-1\}$, let $z_t=x_{r+1}$.
Let $k=2^{r+1}-m+2$. Then $k\geq\lceil 2/d(C^\star)\rceil$ and
$2x_{r+1}+\sum_{t=2^{r+1}}^{2^{r+1}+m-3}z_t=m\cdot x_{r+1}\in m\cdot\bez$
so by Lemma \ref{aminuska} we may pick  $v\in C^\star$
and pick $z_t\in C^\star$ for 
$2^{r+1}+m-2 \leq t \leq 2^{r+2}-1\}$  such that
$v=2x_{r+1}+\sum_{t=2^{r+1}}^{2^{r+2}-1}z_t$.  Let
\[
  D=(-v+C^\star)\cap\bigcap_{t=2^{r+1}}^{2^{r+2}-1}(-z_t+C^\star).
\]
Then $D\in p$ which is an idempotent so by \cite[Theorem 5.8]{HS} pick a
sequence $\langle w_n\rangle_{n=1}^\infty$ in $\ben$ such that
$FS(\langle w_n\rangle_{n=1}^\infty)=\{\sum_{t\in F}x_t:F\in\pf(\ben)\}\subseteq D$.
We may assume that the sequence $\langle w_n\rangle_{n=1}^\infty$ is
increasing. Choose $n(2^{r+1})\in\ben$ such that
$w_{n(2^{r+1})}>\max\big(\big\{x_i:i\in\ohat{2^{r+1}-1}\big\}\cup\big\{y_i:i\in\ohat{r}\big\}\big)$.
Given $j\in\{2^{r+1},2^{r+1}+1,\ldots,2^{r+1}-2\}$, having
chosen $n(j)$, pick $n(j+1)>n(j)$ such that $w_{n(j+1)}>z_j+w_{n(j)}$.

For $2^{r+1} \leq j \leq 2^{r+1}-1$, let
$x_j=z_j+w_{n(j)}$ and let $y_{r+1}=2x_{r+1}+\sum_{j=2^{r+1}}^{2^{r+2}-1}x_j$.
Then each $x_j\in C^\star$ and 
$y_{r+1}=v+\sum_{j=2^{r+1}}^{2^{r+2}-1}w_{n(j)}\in C^\star$.
Finally, $\max\big(\big\{x_i:i\in\ohat{2^{r+1}-1}\big\}\cup\big\{y_i:i\in\ohat{r}\big\}\big)
<x_{2^{r+1}}<x_{2^{r+1}+1}<\cdots<x_{2^{r+2}-1}<y_{r+1}$.\end{proof}

\begin{remark}We remark that Corollary \ref{strongcentral} holds more generally. Let $G$ be a commutative group with the property
that, for every $n\in\ben$, $n\cdot G$ is a central subset of $G$. Then Corollary~\ref{strongcentral} holds with $\ben$ replaced by $G$.
The proof is essentially the same.
\end{remark}

\bibliographystyle{amsplain}

\end{document}